\newtheorem{theorem}{Theorem}[section]
\newtheorem{proposition}[theorem]{Proposition}
\newtheorem{lemma}[theorem]{Lemma}
\newtheorem{corollary}[theorem]{Corollary}
\theoremstyle{remark}
\newtheorem{remark}[theorem]{Remark}
\newtheorem{example}[theorem]{Example}
\newcommand{\PP}{\mathbb{P}}
\newcommand{\OO}{\mathcal{O}}
\newcommand{\OOO}{\mathbb{O}}
\newcommand{\ZZ}{\mathbb{Z}}
\def\tmax{\mathrm{max}}\def\tmin{\mathrm{min}}
\def\op{\oplus}\def\cT{\mathcal{T}}
\def\fg{\mathfrak{g}}\def\fr{\mathfrak{r}}
\def\fq{\mathfrak{q}}\def\fp{\mathfrak{p}}
\def\bP{\mathbb{P}} \def\bbO{\mathbb{O}}
\def\cayley{\mathbb{OP}^2} \def\freud{G(\bbO^3,\bbO^6)}
\def\cQ{\mathcal{Q}}
\newcounter{acnt}
\newenvironment{a_list}{ 
  \begin{list}{{(\alph{acnt})}}
   {\usecounter{acnt} \setlength{\itemsep}{3pt}
    \setlength{\leftmargin}{20pt} \setlength{\labelwidth}{20pt} }
   }
   {\end{list}}
\begin{document}

\title{Flexibility of Schubert classes}

\author{Izzet Coskun} 

\date{} 

\address{University of Illinois at
  Chicago, Department of Mathematics, Statistics and Computer Science, Chicago, IL 60607}

\email{coskun@math.uic.edu}

\author{Colleen Robles}
\address{Texas A \& M University, Mathematics Department, College Station, TX 77843}
\email{robles@math.tamu.edu}

\thanks{During the preparation of this article the author was
  partially supported by the NSF CAREER grant DMS-0950951535 and an Alfred P. Sloan Foundation Fellowship, the second author was partially supported by the NSF grant DMS-1006353.}

\begin{abstract}
In this note, we discuss the flexibility of Schubert classes in homogeneous varieties. We give several constructions for representing multiples of a Schubert class by irreducible subvarieties. We sharpen  \cite[Theorem 3.1]{robles2}  by proving that every positive multiple of an obstructed class in a cominuscule homogeneous variety can be represented by an irreducible subvariety.  
\end{abstract}

\keywords{Schubert class, cominuscule variety, rational homogeneous variety}
\subjclass[2010]
{14M15. 
 14M17. 
 53C24, 
 53C30, 
}

\maketitle

\begin{figure}[p] 
\setlength{\unitlength}{25pt}
\begin{picture}(5,17)
\put(5,0){\line(-1,1){5}}\put(3,4){\line(-1,1){2}}
\put(3,6){\line(-1,1){2}}\put(4,7){\line(-1,1){4}}
\put(5,8){\line(-1,1){4}}\put(3,12){\line(-1,1){1}}
\put(0,11){\line(1,1){5}}\put(1,10){\line(1,1){2}}
\put(1,8){\line(1,1){2}}\put(0,5){\line(1,1){4}}
\put(1,4){\line(1,1){4}}\put(2,3){\line(1,1){1}}
\multiput(5,0)(-1,1){6}{\circle*{0.15}}
\multiput(3,4)(-1,1){3}{\circle*{0.15}}
\multiput(3,6)(-1,1){3}{\circle*{0.15}}
\multiput(4,7)(-1,1){5}{\circle*{0.15}}
\multiput(5,8)(-1,1){5}{\circle*{0.15}}
\multiput(3,12)(-1,1){2}{\circle*{0.15}}
\multiput(5,16)(-1,-1){3}{\circle*{0.15}}
\thicklines
\put(3,4){\circle{0.28}} \put(0,5){\circle{0.28}}
\put(1,8){\circle{0.28}} \put(5,8){\circle{0.28}}
\put(0,11){\circle{0.28}} \put(3,12){\circle{0.28}}
\put(5,0){\circle{0.28}} \put(5,16){\circle{0.28}}
\thinlines
\put(4.2,14.9){78 +} \put(3.2,13.9){78 +} \put(2.2,12.9){78 +}
\put(3.2,11.9){33} \put(1.2,11.9){45 T}
\put(0.2,10.9){12} \put(-2.4,10.9){\small{$C(OG(5,10))$}} 
\put(2.2,10.9){33 +}\put(1.2,9.9){12 +} 
\put(3.2,9.9){21 T} \put(2.2,8.9){12 +} \put(4.2,8.9){9 T}
\put(1.2,7.9){5} \put(3.2,7.9){7 *} 
\put(5.2,7.9){2} \put(5.0,8.3){$\mathcal{Q}^8$} 
\put(2.2,6.9){5 +} \put(4.2,6.9){2 +}
\put(1.2,5.9){3 *} \put(3.2,5.9){2 +}
\put(0.2,4.9){1} \put(-0.1,5.25){$\PP^5$} 
\put(2.2,4.9){2 +}
\put(1.2,3.9){1 +} \put(3.2,3.9){1} \put(3.1,4.2){$\PP^4$} 
\put(2.2,2.9){1 +} \put(3.2,1.9){1 +} \put(4.2,0.9){1 +}
\put(5.2,-0.1){1} \put(5.2,15.9){78}
\normalsize
\put(3.7,-0.1){$o \in X$} \put(4.4,15.8){$X$}
\end{picture}
\caption{Hasse diagram of the Cayley plane $\mathbb{O}\PP^2$.}
\label{f:E6} 
\end{figure}
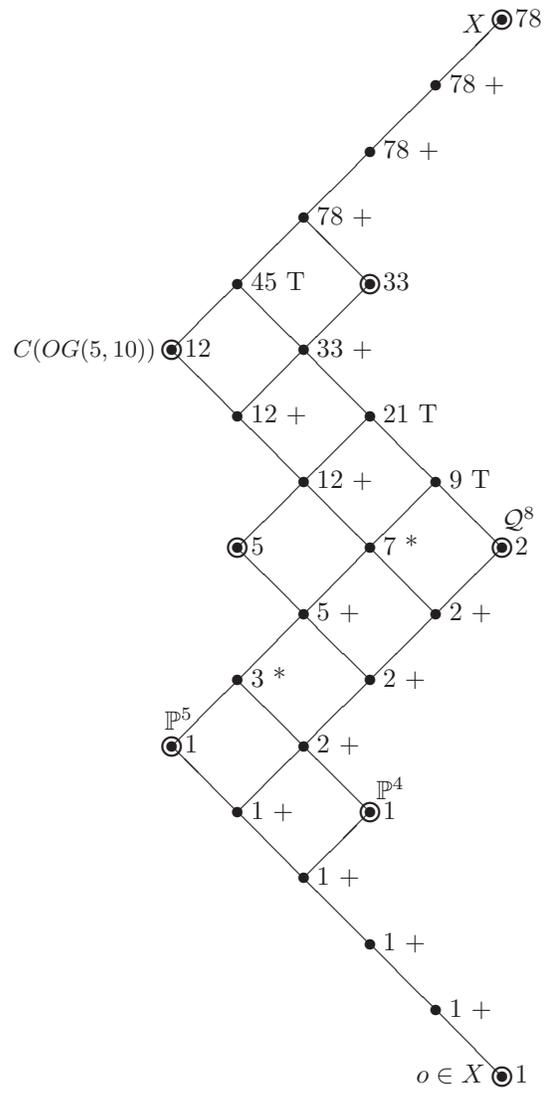
\begin{figure}[p]
\setlength{\unitlength}{20pt}
\begin{picture}(8,28)
\put(2,6){\line(1,-1){6}} \put(3,7){\line(1,-1){2}}
\put(3,9){\line(1,-1){2}} \put(2,12){\line(1,-1){4}}
\put(3,13){\line(1,-1){4}} \put(4,14){\line(1,-1){4}}
\put(0,17){\line(1,-1){4}} \put(1,18){\line(1,-1){4}}
\put(2,19){\line(1,-1){4}} \put(3,20){\line(1,-1){2}}
\put(3,22){\line(1,-1){2}} \put(0,27){\line(1,-1){6}}
\put(4,4){\line(1,1){1}} \put(3,5){\line(1,1){5}}
\put(2,6){\line(1,1){5}} \put(3,9){\line(1,1){3}}
\put(3,11){\line(1,1){2}} \put(2,12){\line(1,1){2}}
\put(4,13){\line(1,1){2}} \put(3,14){\line(1,1){2}}
\put(4,12){\line(0,1){1}} \put(4,14){\line(0,1){1}}
\put(3,13){\line(0,1){1}} \put(5,13){\line(0,1){1}}
\put(2,15){\line(1,1){3}} \put(1,16){\line(1,1){5}}
\put(0,17){\line(1,1){5}} \put(3,22){\line(1,1){1}}
\multiput(2,6)(1,-1){7}{\circle*{0.25}} \multiput(3,7)(1,-1){3}{\circle*{0.25}}
\multiput(3,9)(1,-1){3}{\circle*{0.25}} \multiput(2,12)(1,-1){5}{\circle*{0.25}}
\multiput(3,13)(1,-1){5}{\circle*{0.25}} \multiput(4,14)(1,-1){5}{\circle*{0.25}}
\multiput(0,17)(1,-1){5}{\circle*{0.25}} \multiput(1,18)(1,-1){5}{\circle*{0.25}}
\multiput(2,19)(1,-1){5}{\circle*{0.25}} \multiput(3,20)(1,-1){3}{\circle*{0.25}}
\multiput(3,22)(1,-1){3}{\circle*{0.25}} \multiput(0,27)(1,-1){7}{\circle*{0.25}}
\thicklines
\put(0,27){\circle{0.45}}\put(8,0){\circle{0.45}}
\put(5,5){\circle{0.45}} \put(2,6){\circle{0.45}} 
\put(3,9){\circle{0.45}} \put(8,10){\circle{0.45}} 
\put(2,12){\circle{0.45}} \put(4,13){\circle{0.45}} 
\put(4,14){\circle{0.45}} \put(6,15){\circle{0.45}} 
\put(0,17){\circle{0.45}} \put(6,21){\circle{0.45}} 
\put(3,22){\circle{0.45}} \put(5,18){\circle{0.45}} 
\thinlines
\footnotesize
\put(6.5,-0.2){$o \in X$} \put(-0.7,26.9){$X$}
\put(5.4,5.2){$\PP^5$} \put(1.3,6.2){$\PP^6$}
\put(8.7,9.9){$\mathcal{Q}^{10}$}
\put(8.4,-0.15){1}
\put(7.2,0.85){1 +} \put(6.2,1.85){1 +} \put(5.2,2.85){1 +}
\put(4.2,3.85){1 +} \put(5.4,4.8){1} \put(3.3,4.85){1 +}
\put(4.2,5.85){2 +} \put(2.4,5.85){1}
\put(5.2,6.85){2 +} \put(3.2,6.85){3 *}
\put(4.2,7.85){5 +} \put(6.2,7.85){2 +}
\put(3.4,8.85){5} \put(5.3,8.85){7 *} \put(7.2,8.85){2 +}
\put(4.2,9.85){12 +} \put(6.2,9.85){9 *} \put(8.4,9.9){2}
\put(3.2,10.85){12 +} \put(5.2,10.85){21 *} \put(7.2,10.85){11 T}
\put(2.4,11.85){12} \put(4.2,11.85){33 +} \put(6.2,11.85){32 T}
\put(2,12.85){45 *} \put(4.0,12.4){33} \put(5.2,12.85){65 T}
\put(2.0,13.85){78 +} \put(3.75,14.3){110} \put(5.2,13.85){98 +}
\put(2.2,14.85){78 +} \put(4.2,14.85){286 T} \put(6.4,14.85){98}
\put(1.2,15.85){78 +} \put(3.2,15.85){364 T} \put(5.2,15.85){384 T}
\put(0.4,16.85){78} \put(-1.9,16.9){$C(\mathbb{O}\PP^2)$}
\put(2.2,16.85){442 T} \put(4.2,16.85){748 +}
\put(1.2,17.85){520 T} \put(3.3,17.85){1190 T} \put(5.4,17.85){748}
\put(2.2,18.85){1710 T} \put(4.2,18.85){1938 +} 
\put(3.2,19.85){3648 T} \put(5.2,19.85){1938 +}
\put(4.2,20.85){5586 +} \put(6.4,20.85){1938}
\put(3.4,21.85){5586} \put(5.3,21.85){7524 T}
\put(4.2,22.85){13110 +} \put(3.2,23.85){13110 +} \put(2.2,24.85){13110 +}
\put(1.2,25.85){13110 +} \put(0.3,26.85){13110}
\normalsize
\end{picture}
\caption{Hasse diagram of the Freudenthal variety $G(\mathbb{O}^3,\mathbb{O}^6)$.}
\label{f:E7} 
\end{figure}
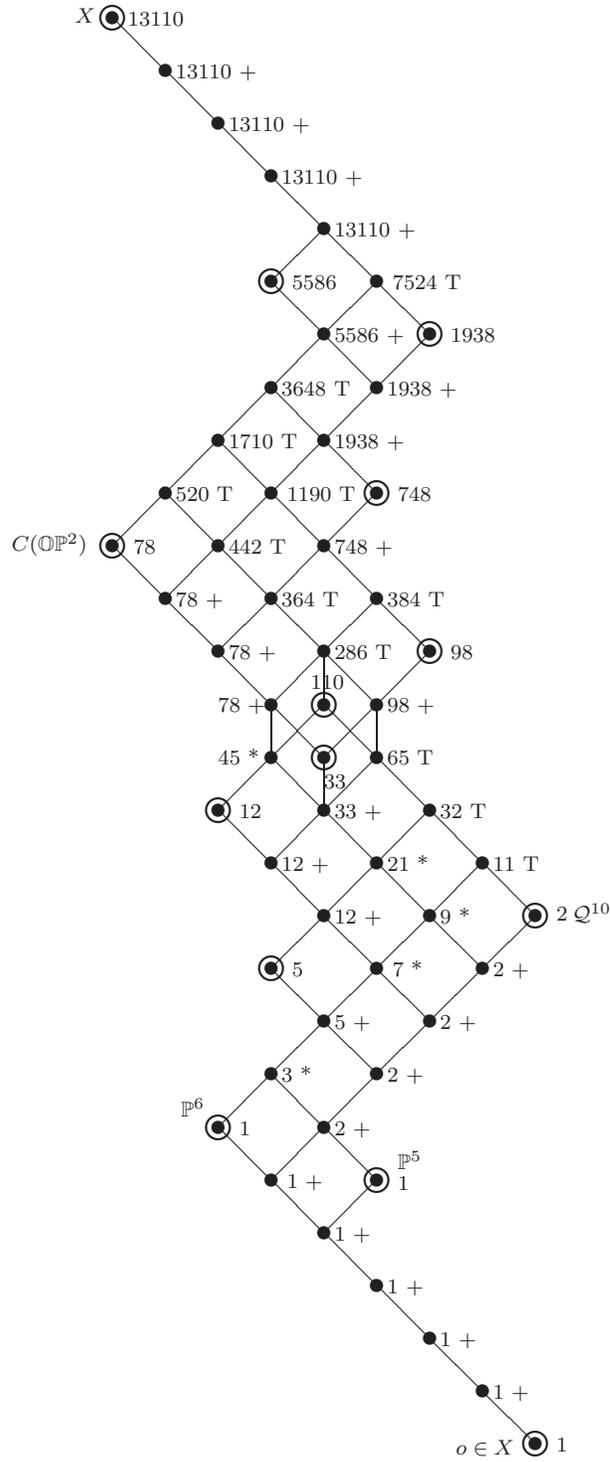

\section{Introduction}

Let $G$ be a linear algebraic group and let $P$ be a parabolic subgroup. The cohomology of the homogeneous variety $X= G/P$ admits a $\ZZ$-basis given by the classes of Schubert varieties. In 1961, Borel and Haefliger \cite{borel} asked whether a Schubert class $\sigma$ can be represented by projective subvarieties of $X$ other than Schubert varieties. A Schubert class $\sigma$ is called {\em rigid} if Schubert varieties are the only projective subvarieties representing $\sigma$. More generally, given a positive integer $m$, the class $m \sigma$ can be represented by a union of $m$ Schubert varieties. The class $\sigma$ is called {\em multi rigid} if these are the only projective algebraic subsets  representing $m \sigma$ for every positive integer $m$. Otherwise, the class $\sigma$ is called {\em flexible}.

The rigidity of Schubert classes has been studied by many authors (see  \cite{bryant:rigidity}, \cite{coskun:rigid}, \cite{coskun:orthogonal}, \cite{coskun:symplectic}, \cite{hartshorne:smooth},\cite{hong:rigidity}, \cite{hong:singular},  \cite{robles2}, \cite{robles} and \cite{walters:thesis}). Robles and The \cite{robles}, \cite{robles2}, building on the work of R. Bryant \cite{bryant:rigidity}, J. Hong \cite{hong:rigidity}, \cite{hong:singular}, and M. Walters \cite{walters:thesis},  characterized the multi rigid Schubert classes in cominuscule homogeneous varieties. First, in \cite{robles}, the authors identified a space $\mathcal{O}_\sigma$ of first order obstructions to the multi rigidity 
of a Schubert class $\sigma$ in Lie algebra cohomology: if these obstructions vanish, $\mathcal{O}_\sigma=0$, then $\sigma$ is multi rigid. We call a Schubert class {\em obstructed} if $\mathcal{O}_\sigma \not= 0$. Then Robles \cite{robles2} completed the classification by constructing an irreducible variety representing a multiple of $\sigma$ (in almost all cases $2 \sigma$ or $4 \sigma$) for every obstructed class in cominuscule homogeneous varieties.

In this note, we give several simple constructions of irreducible subvarieties representing multiples of Schubert classes.  Our main result is the following sharpening of \cite[Theorem 3.1]{robles2}.  

\begin{theorem}\label{main-theorem}
Let $\sigma$ be an obstructed Schubert class in the cohomology of a cominuscule homogeneous variety $X$ and let $m$ be a positive integer. Then there exists an irreducible subvariety of $X$ representing $m \sigma$. In particular, obstructed Schubert classes in $H^*(X, \ZZ)$ are flexible. 
\end{theorem}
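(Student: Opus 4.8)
The plan is to reduce the statement to a short list of geometric constructions and then verify, case by case over the classification of cominuscule varieties, that every obstructed class is covered, with the constructions arranged so that the multiple $m$ is arbitrary. The starting point is the classification of obstructed Schubert classes in a cominuscule $X$ from \cite{robles}: these come in a small number of uniform families for the infinite series (Grassmannians, quadrics, Lagrangian Grassmannians, spinor varieties) and form an explicit finite list for the two exceptional cominuscule varieties $\OOO\PP^2$ and $G(\OOO^3,\OOO^6)$ (Figures \ref{f:E6} and \ref{f:E7}). By \cite[Theorem 3.1]{robles2} it is already known that each obstructed $\sigma$ is represented by \emph{some} multiple; the point of the present theorem is to upgrade ``some multiple'' to ``every multiple'', so the constructions must depend on $m$ in a harmless way.

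The main construction is the degree-$m$ hypersurface section of a larger Schubert variety. Write $h=\sigma_1\in H^2(X,\ZZ)$ for the hyperplane class of the minimal homogeneous embedding $X\hookrightarrow\PP^N$, which coincides with the Schubert divisor class. Suppose $\Sigma_\sigma$ is contained in a Schubert variety $W$ with $c:=\dim W-\dim\Sigma_\sigma\ge 1$ and $[W]\cdot h^{\,c}=\sigma$ in $H^*(X,\ZZ)$ (a single Schubert class, coefficient one). Then for any $m\ge 1$ set $Y:=W\cap F_m\cap H_1\cap\cdots\cap H_{c-1}$, where $F_m\subset\PP^N$ is a general hypersurface of degree $m$ and $H_1,\dots,H_{c-1}$ are general hyperplanes. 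All these linear systems restrict to very ample, hence base-point-free, systems on $W$; since $\dim\Sigma_\sigma\ge 1$ for any obstructed class, repeated application of Bertini's theorem shows $Y$ is irreducible of dimension $\dim\Sigma_\sigma$, and the projection formula gives $[Y]=m\,[W]\cdot h^{\,c}=m\sigma$. The same $W$ works for all $m$ at once, so whenever such a $W$ exists $\sigma$ is flexible and every multiple is represented.

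It then remains to produce such a $W$ for as many obstructed classes as possible; the required identity $[W]\cdot h^{\,c}=\sigma$ is a Schubert-calculus computation in the (co)minuscule Pieri/Chevalley formalism, uniform on partitions, strict partitions, or the relevant posets for the classical series. For classes not of this shape — typically those whose Schubert variety is covered by more than one Schubert variety, or where the relevant Chevalley coefficient exceeds one — I would fall back on auxiliary constructions: (i) the cone construction, using that many obstructed cominuscule Schubert varieties are cones $\Sigma_\sigma=C(\Lambda,B)$ with vertex a coordinate linear space $\Lambda\subset X$ over a smaller Schubert variety $B$, which reduces building an irreducible representative of $m\sigma$ to building one of $m[B]$ inside a lower-dimensional (co)minuscule variety, i.e.\ to a previously treated case; and (ii) restriction to a sub-(co)minuscule variety $X'\subset X$ on which $\sigma$ becomes a hyperplane-type class, so that $m\sigma$ is represented by a general degree-$m$ hypersurface section of $X'$, again irreducible by Bertini. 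For $\OOO\PP^2$ and $G(\OOO^3,\OOO^6)$ one matches each of the finitely many obstructed nodes in Figures \ref{f:E6}--\ref{f:E7} with one of these constructions directly.

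The main obstacle is exactly this second half: no single construction covers every obstructed class, so the proof is an intricate (but finite) case analysis, and the delicate points are (a) exhibiting, for each class, a Schubert variety $W$ (or a cone/sub-variety structure) with the precise cohomological property required, which hinges on knowing the relevant Pieri coefficients equal $1$; (b) the exceptional cominuscule varieties, where the Schubert varieties are singular and their cone or sub-cominuscule structure must be extracted from the Hasse diagram; and (c) uniformity in $m$ — the representatives in \cite{robles2} are often quadric-type (degree $2$) or quartic-type (degree $4$) sections and thus only yield even multiples, so for those classes one genuinely needs the degree-$m$ construction above, hence needs the ambient $W$ with $[W]\cdot h^{\,c}=\sigma$ to exist. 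Finally one must dispatch the Bertini hypotheses and the low-dimensional edge cases (e.g.\ $\dim\Sigma_\sigma=1$), which are harmless but deserve a remark.
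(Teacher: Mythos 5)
There is a genuine gap: your toolkit omits the two constructions that actually carry most of the weight in the paper, and your main construction fails for a large portion of the obstructed classes. The degree-$m$ hypersurface section of a Schubert variety $W$ requires $[W]\cdot h^{\,c}=\sigma$ with coefficient one; since the Chevalley coefficients in a minuscule $G/P$ are all $1$, for $c=1$ this is exactly the condition that $\sigma$ be the \emph{unique} Schubert divisor class of some $W\supset\Sigma_\sigma$, and for $c\ge 2$ the product $[W]\cdot h^{\,c}$ is essentially never a single class (it counts chains in the Bruhat order), so nothing is gained by allowing higher codimension. This condition fails for typical obstructed classes already in type $A$: for $\sigma_{(2,5)}$ in $G(2,6)$ (associated sequence $\mu=(1^1,3^1)$, hence obstructed), the only Schubert varieties of one larger dimension containing $\Sigma_{(2,5)}$ are $\Sigma_{(3,5)}$ and $\Sigma_{(2,6)}$, and each has \emph{two} Schubert divisor classes, so no admissible $W$ exists. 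Your fallbacks do not rescue this: $\Sigma_{(2,5)}$ is not a cone over a lower-dimensional case of the same problem in any useful way, and restricting to $G(2,F_5)$ makes it a codimension-two class, not a hyperplane-type class. What is needed here is the paper's second construction: an incidence variety, namely the closure of the locus of $k$-planes $W$ with $\dim(W\cap F_{\lambda_i})=i$ for $i\neq u$ and $\PP W$ meeting (in a $\PP^{u-1}$) a cone over a plane curve of degree $m$ inside $\PP F_{\lambda_u+1}$. The degree-$m$ curve, not a degree-$m$ hypersurface, is what produces the factor $m$; irreducibility comes from the theorem on the dimension of fibers rather than Bertini. This construction (and its transfer to $LG(n,2n)$ and $OG(n,2n)$ by intersecting with a fixed maximal isotropic subspace) handles the bulk of the classical cases and has no substitute in your outline.

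A second, smaller omission concerns the exceptional varieties: several obstructed nodes of $\OOO\PP^2$ and $G(\OOO^3,\OOO^6)$ (those marked $T$ in Figures \ref{f:E6} and \ref{f:E7}, e.g.\ $(9:9)$ and $(10:21)$ in the Cayley plane) are neither unique-Schubert-divisor classes nor cones nor hyperplane-type classes of a sub-cominuscule variety; they are swept out by pencils or nets of quadrics (or by families of lines) via a Tits correspondence, and the irreducible representative of $m\sigma$ is obtained by sweeping out the quadrics (or lines) parameterized by a degree-$m$ curve or surface in the parameter space, with generic injectivity of the sweeping map checked by a Poincar\'e duality argument. You correctly anticipate that the proof is a finite case analysis and that the paper's quadric- and quartic-type representatives from \cite{robles2} only give even multiples, but the specific constructions you propose do not close the list; the incidence-variety and Tits-transform constructions are the missing ingredients.
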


As a consequence of Theorem \ref{main-theorem}, Schubert classes in cominuscule homogeneous varieties satisfy a dichotomy. Either every positive multiple of $\sigma$ can be represented by an irreducible subvariety, or none of the multiples bigger than one can be represented by an irreducible subvariety.  We pose the question whether the same dichotomy holds for Schubert classes in any homogeneous variety. Our constructions apply to all homogeneous varieties and show that a large set of Schubert classes in arbitrary homogeneous varieties also satisfy the dichotomy. However, at present, we do not have a complete classification of multi rigid Schubert classes beyond the cominuscule case. 

There are several standard ways of constructing irreducible subvarieties of a (homogeneous) variety $X$.  First, one may intersect an irreducible subvariety $Y$ of $X$ with a general hypersurface $Z$ of degree $d$ in the minimal embedding of $X$. If $\dim(Y) \geq 2$, then, by Bertini's Theorem, $Y \cap Z$ is also irreducible. Hence, when the class of $Y \cap Z$ is $m \sigma$, we can represent $m \sigma$ by an irreducible subvariety.

Second, homogeneous varieties represent functors. For example, the Grassmannian $G(k,n)$ parameterizes $k$-dimensional subspaces of an $n$-dimensional vector space. One can define subvarieties of a homogeneous variety $X$ by imposing geometric conditions, such as incidence and tangency conditions, on the objects parameterized by $X$. For example, Schubert varieties themselves are defined by imposing incidence conditions with respect to a fixed flag. 

Third, one may use correspondences to obtain irreducible subvarieties of $X$. Inclusions among homogeneous varieties and Tits correspondences are the most fruitful for constructing irreducible subvarieties representing multiples of Schubert classes. These three standard constructions suffice to prove Theorem \ref{main-theorem}. 

The organization of this paper is as follows. In \S \ref{sec-preliminary}, we will recall basic facts concerning rigidity of Schubert classes in cominuscule homogeneous spaces. In \S \ref{sec-proof}, we will construct irreducible subvarieties representing multiples of obstructed Schubert classes in cominuscule varieties and prove Theorem \ref{main-theorem}. 


\subsection{Acknowledgments:} We would like to thank R. Bryant and J.M. Landsberg for many discussions over the years on rigidity.

\section{Preliminaries}\label{sec-preliminary}
In this section, we discuss basic facts concerning cominuscule homogeneous varieties and their Schubert classes. We also recall the classification of unobstructed classes given in \cite{robles}.

The classification of cominuscule homogeneous varieties (or, equivalently,  compact complex Hermitian symmetric spaces) is as follows \cite{billey-lakshmibai}, \cite{helgason}:

\begin{enumerate}
\item Grassmannians $G(k,n) = G(k,V)$ parameterizing $k$-dimensional subspaces of an $n$-dimensional vector space $V$,
\item Smooth quadric hypersurfaces in $\PP^{n+1}$,
\item Lagrangian Grassmannians $LG(n,2n) \subset G(n,2n)$ parameterizing subspaces that are isotropic with respect to a non-degenerate skew-symmetric form,
\item Spinor varieties $OG(n,2n) \subset G(n,2n)$ that form an irreducible component of the space of maximal isotropic subspaces with respect to a non-degenerate symmetric form,
\item Two exceptional homogeneous varieties: the Cayley plane $\mathbb{O}\PP^2$ ($E_6/P_6$) and the Freudenthal variety $G(\mathbb{O}^3, \mathbb{O}^6)$ ($E_7/P_7$).
\end{enumerate}
After introducing some notation, we will discuss the Schubert classes in each case.

Given a sequence $\lambda$ of increasing integers $\lambda_1 < \dots < \lambda_s$, we can associate a sequence of non-decreasing integers $\tilde{\lambda}$ by setting $\tilde{\lambda}_i = \lambda_i - i$. For discussing rigidity, it is more convenient to express $\tilde{\lambda}$ as $\mu= (\mu_1^{i_1}, \dots, \mu_t^{i_t})$ by grouping together the terms of $\tilde{\lambda}$ that are equal. More concretely, 
$$\tilde{\lambda}_1 = \cdots = \tilde{\lambda}_{i_1} = \mu_1, \ \tilde{\lambda}_{i_1 + 1} = \cdots = \tilde{\lambda}_{i_1 + i_2} = \mu_2, \ \dots, \ \tilde{\lambda}_{i_1 + \dots + i_{t-1} + 1} = \cdots = \tilde{\lambda}_{s= i_1 + \cdots + i_t } = \mu_t,$$ and $\mu_1 < \mu_2 < \cdots < \mu_t$. We refer to $\mu$ as {\em the sequence associated to $\lambda$}.

Let $Q$ denote a non-degenerate symmetric or skew-symmetric form on an $n$-dimensional vector space $V$. A subspace $W$ of $V$ is called {\em isotropic} with respect to $Q$ if $w^T Q v = 0$ for every $v,w \in W$. Let $W^{\perp}$ denote the subspace $\{v \in V\ | \ w^T Q v =0, \forall w \in W \}$. When $Q$ is a non-degenerate symmetric quadratic form, it defines a smooth quadric hypersurface in $\PP V$. A subspace $W$ is isotropic if and only if $\PP W$ is contained in the quadric hypersurface defined by $Q$. The linear space $\PP W^{\perp}$ is the linear space everywhere tangent to $Q$ along $W$ \cite{coskun:restriction}. 

We denote the cohomology class of a variety $Y$ by $[Y]$.

\subsection{Type A Grassmannians.}  \label{S:Gr}
Schubert classes in $G(k,n)$ are indexed by sequences $\lambda$ of length $k$ such that  $0< \lambda_1 < \lambda_2 < \cdots < \lambda_k \leq n$.  Let $F_{\bullet} = F_1 \subset \cdots \subset F_n$ be a full flag. Then the Schubert variety $\Sigma_{\lambda}(F_{\bullet})$ is defined by $$\Sigma_{\lambda}(F_{\bullet}) := \{ W \in G(k,n) \ | \ \dim (W \cap F_{\lambda_i}) \geq i, \ 1 \leq i \leq k  \}.$$ We denote its cohomology class by $\sigma_{\lambda}$.

The multi rigid Schubert classes for $G(k,n)$ have been classified by Hong \cite{hong:singular} and Robles and The \cite{robles}. Let $\mu= (\mu_1^{i_1}, \dots, \mu_t^{i_t})$ be the sequence associated to $\lambda$. The Schubert class $\sigma_{\lambda}$ is multi rigid (equivalently, unobstructed) if and only if 
\begin{itemize}
\item $i_j \geq 2$ for $1 < j < t$,  
\item $\mu_{j-1} \leq \mu_{j} -2$ for $1< j \leq t$, 
\item $i_1 \geq 2$ if $\mu_1 > 0$, and $i_t \geq 2$ if $\mu_t < n-k$ (\cite{hong:singular}, \cite{robles}, \cite{robles2}, \cite{coskun:orthogonal}). 
\end{itemize}
\medskip

\subsection{Quadric hypersurfaces.} Let $Q$ be a smooth quadric hypersurface in $\PP^{n+1}$. Let $L_j$ denote an isotropic subspace of dimension $j$.  If $n$ is odd, the Schubert varieties in $Q$ are linear subspaces $\PP L_j \subset Q$ for $1 \leq j \leq \frac{n+1}{2}$ and linear sections $\PP L_j^{\perp}\cap Q$ of $Q$ for $0 \leq j < \frac{n+1}{2}$.  If $n$ is even, the space of maximal dimensional isotropic subspaces has two connected components distinguished by the cohomology classes of the linear spaces. Two linear spaces $\PP A, \PP B$ belong to the same connected component if and only if $\dim(A \cap B ) = n\  (\mbox{mod} \  2)$ (see \cite{coskun:restriction}, \cite{joe:thebook} for more details). The Schubert varieties in this case are $\PP L_j$ for $1 \leq j < \frac{n}{2}+1$, $\PP L_{\frac{n}{2}+1}$ and $\PP L_{\frac{n}{2}+1}'$, where $L_{\frac{n}{2}+1}$ and $L_{\frac{n}{2}+1}'$ belong to different connected components of the space of maximal isotropic subspaces, and $\PP L_{j}^{\perp} \cap Q$ for $0 \leq j \leq \frac{n}{2}-1$. 

Hong \cite{hong:rigidity} proves that the point class ($[\PP L_1]$), the fundamental class ($[\PP L_0^{\perp}]$) and, if $n$ is even, the classes of maximal isotropic spaces ($[\PP L_{\frac{n}{2}+1}]$ and $[\PP L_{\frac{n}{2}+1}' ]$) are multi rigid. It is easy to see that all other Schubert classes are flexible (see \cite{coskun:orthogonal} or \cite{robles2}).

\subsection{Lagrangian Grassmannians.} \label{S:LG}
Let $Q$ be a non-degenerate skew-symmetric form on a vector space $V$ of dimension $2n$. Let $LG(n,2n)$ denote the Grassmannian parameterizing Lagrangian subspaces of $V$. Let $0 \leq s \leq n$ be an integer. The Schubert classes in $LG(n,2n)$ are indexed by partitions $\lambda$ such that $$0 < \lambda_1 < \lambda_2 < \cdots < \lambda_s \leq n.$$ Fix an isotropic partial flag $F_1 \subset \cdots \subset F_n $, where $F_i$ is an isotropic subspace of dimension $i$.  The Schubert variety $\Sigma_{\lambda}$ is the Zariski closure of the locus $$\{ W \in LG(n,2n) \ | \ \dim(W \cap F_{\lambda_i}) = i, 1 \leq i \leq s \} .$$ 
Let  $\mu= (\mu_1^{i_1}, \dots, \mu_t^{i_t})$ be the sequence associated to $\lambda$. Robles and The \cite{robles}, \cite{robles2} prove that the Schubert class $\sigma_{\lambda}$ is multi rigid (equivalently, unobstructed) if and only if 
\begin{itemize}
\item $i_j \geq 2$ and $\mu_{j-1} \leq \mu_{j} -2$ for all $1 < j \leq t$, 
\item  $i_1 \geq 2$ if $\mu_1 > 0$, and $\lambda_s \leq n-2$ if $\lambda_s < n$. 
\end{itemize}

\subsection{Spinor varieties.} \label{S:Spin}
Let $Q$ be a non-degenerate symmetric form on a vector space $V$ of dimension $2n$. Let $OG(n,2n)$ denote the spinor variety that forms one of the two isomorphic irreducible components of the space parameterizing $n$-dimensional isotropic subspaces of $V$. 
Schubert varieties are indexed by increasing sequences $1 \leq \lambda_1 < \cdots < \lambda_s \leq n-1$ of length at most $n-1$. Let $F_1 \subset \cdots \subset F_{n-1}$ be an isotropic  partial  flag. The Schubert variety is the Zariski closure of the locus $$\{ W \in OG(n,2n) \ | \ \dim(W \cap F_{\lambda_i}) = i, 1 \leq i \leq s \}.$$ 
Let $\mu= (\mu_1^{i_1}, \dots, \mu_t^{i_t})$ be the sequence associated to $\lambda$. Robles and The \cite{robles}, \cite{robles2} prove that $\sigma_{\lambda}$ is multi rigid (equivalently, unobstructed) if and only if 
\begin{itemize}
\item $i_j \geq 2$ and $\mu_{j-1} \leq \mu_{j} -2$ for $1 < j \leq t$, 
\item $i_1\geq 2$ if $\mu_1 >0$ and  $\lambda_s \leq n-3$ if $\lambda_s < n-1$.
\end{itemize}

\begin{remark} \label{R:LGS}
Observe that Schubert classes in $LG(n,2n)$ and $OG(n+1, 2n+2)$ are indexed by the same set of increasing sequences. The sequences that index multi rigid (equivalently, unobstructed) classes in the two varieties coincide. 
\end{remark}

\subsection{The Cayley plane and the Freudenthal variety.} \label{S:excep}
The Cayley plane $\mathbb{O}\PP^2 = E_6/P_6$ is the closed $E_6$--orbit in $\mathbb{P}V$, where $V$ is an irreducible representation of $E_6$ of dimension 27.
It is a 16-dimensional cominuscule homogeneous variety. Similarly, the Freudenthal variety $G(\OOO^3, \OOO^6) = E_7/P_7$ is the closed orbit in $\mathbb{P}V$, where $V$ is the irreducible $E_7$--representation of dimension 56. It is a 27-dimensional cominuscule homogeneous variety. We refer the reader to \cite{iliev}, \cite{jm1}, \cite{jm2}, \cite{jm3} and \cite{nikolenko} for more details on the discussion in this section. 

The Schubert classes in $\OOO \PP^2$ and $G(\OOO^3, \OOO^6)$ are indexed by the nodes of their Hasse diagrams (see Figures \ref{f:E6} and \ref{f:E7} on pages \pageref{f:E6} and \pageref{f:E7}). In the figures, the classes of the Schubert varieties of the same dimension lie at the same height. The dimension of the variety decreases as one proceeds from top to bottom. Closure relations among the Schubert cells is indicated by the edges in the diagram. A Schubert cell of dimension $d$ is contained in the closure of a Schubert cell of dimension $d+1$ if and only if there is an edge in the diagram connecting the corresponding nodes. The nodes that have been encircled correspond to the multi rigid Schubert classes \cite{robles}, \cite{robles2}. The numbers to the right of each node denote the degrees of the Schubert varieties in the minimal embedding of $\OOO \PP^2$ and $G(\OOO^3, \OOO^6)$ in $\PP^{26}$ and $\PP^{55}$, respectively. We will explain the additional decorations in the diagrams as the need arises. 

We will refer to the Schubert classes by a pair of integers $(\dim: \deg)$, where the first one denotes the dimension of the variety and the second one denotes the degree of the variety in the minimal embedding.  (The cominuscule Cayley plane and Freudenthal variety are also minuscule, so the degrees of their Schubert varieties may be computed using \cite[\S4.8.4]{billey-lakshmibai}.)  This notation is unambiguous except for the two linear spaces $\PP^4$ in the Cayley plane and the two linear spaces $\PP^5$ in the Freudenthal variety. We will distinguish between these two classes by specifying whether they are maximal linear spaces.  

\subsection{Tits correspondences} \label{S:tits}

Tits correspondences clarify the geometry of a rational homogeneous variety $G/P$.  These correspondences will be especially useful in our discussion of the exceptional Cayley plane and Freudenthal variety.  Fix a Borel subgroup $B \subset G$.  Every parabolic subgroup of $G$ is conjugate to a subgroup containing $B$.  The subgroups containing $B$ are the \emph{standard parabolics}.  The standard parabolics are in one-to-one correspondence with the subsets of the nodes in the Dynkin diagram of $G$.  Our convention is that a single node corresponds to a maximal parabolic, and the full Dynkin diagram to the Borel.  Given a subset $I\subset \{1,\ldots,\mathrm{rank}(G)\}$ indexing the nodes, let $P = P_I$ be the corresponding parabolic subgroup.  So, for example, the Cayley plane $\cayley$ is $E_6/P_6$, where $P_6$ is the maximal (standard) parabolic associated to the sixth node.  (The Cayley plane is isomorphic to the \emph{dual Cayley plane} $E_6/P_1$.)  Similarly, the Freudenthal variety $\freud$ is $E_7/P_7$, where $P_7$ is the maximal parabolic associated to the seventh node of the Dynkin diagram.  Given two standard parabolics $P=P_I$ and $Q=P_J$, the intersection $P \cap Q = P_{I\cup J}$ is also a standard parabolic.  There is a natural double fibration, called the \emph{Tits correspondence}, given by the diagram: 
\begin{center} \setlength{\unitlength}{4pt}
\begin{picture}(33,11)
\put(0,0){$G/P$} 
\put(10,6.5){\vector(-1,-1){4}}
\put(10,8.5){$G/(P\cap Q)$}
\put(21.5,6.5){\vector(1,-1){4}}
\put(25,0){$G/Q$,}
\put(6,5.5){$\eta$}
\put(23.5,5){$\tau$}
\end{picture}
\end{center}
where the maps $\eta$ and $\tau$ are the natural projections. Given a subset $\Sigma \subset G/Q$, {\em the Tits transform} is $\mathcal{T}(\Sigma) : = \eta(\tau^{-1}(\Sigma))$.  

The Tits transform of a point $o \in G/Q$ will play a crucial r\^ole in our discussion of Schubert varieties in the Cayley plane and Freudenthal variety.  These transforms have been studied by Landsberg and Manivel \cite{jm1}, and we review the relevant results in Examples \ref{eg:E6a} and \ref{eg:E7a}.

\begin{example} \label{eg:E6a}
Let $G/P = E_6/P_6 = \cayley$ be the Cayley plane.
\begin{a_list}
\item The Tits transformation of a point $o \in E_6/P_5$ is a line $\bP^1 \subset E_6/P_6$ in the Cayley plane.  It follows that $\cayley$ is uniruled by lines, and the uniruling is parameterized by $G/Q=E_6/P_5$.

\item Similarly, the Tits transformation of a point $o \in E_6/P_1$ in the dual Cayley plane is a smooth 8-dimensional quadric $\cQ^8 \subset E_6/P_6$.  In particular, the $\cayley$ is uniruled by quadrics, and the uniruling is parameterized by $G/Q=E_6/P_1$.
\end{a_list}
\end{example}

\begin{example} \label{eg:E7a}
Let $G/P = E_7/P_7 = \freud$ be the Freudenthal variety.
\begin{a_list}
\item The Tits transformation of a point $o \in E_7/P_6$ is a line $\bP^1 \subset E_7/P_7$ in the Freudenthal variety.  It follows that $\freud$ is uniruled by lines, and the uniruling is parameterized by $G/Q=E_7/P_6$.

\item Similarly, the Tits transformation of a point $o \in E_7/P_1$ is a smooth 10-dimensional quadric $\cQ^{10} \subset E_7/P_7$.  In particular, $\freud$ is uniruled by quadrics, and the uniruling is parameterized by $G/Q=E_7/P_1$.
\end{a_list}
\end{example}

More generally, we may use the Weyl group and (generalized) Bruhat decomposition of $G$ to compute the Tits transform of a Schubert variety (Lemma \ref{L:tits} below).  What follows is standard and there are many good references, for example \cite{springer}.  A parabolic subalgebra $\fp$ is the direct sum $\fp_0 \op \fp_+$ of a reductive subalgebra $\fp_0$ and a nilpotent subalgebra $\fp_+$.  Let $W$ denote the Weyl group of $\fg$, and identify the Weyl group of $\fp_0$ with a subgroup $W_\fp \subset W$.  Given two elements $v,w \in W$, the subsets $B v^{-1} P$ and $B w^{-1}P$ are either disjoint or equal.  Moreover, they are equal if and only if $W_\fp v = W_\fp w$.  The (generalized) \emph{Bruhat decomposition} of $G$ associated to the pair $B \subset P$ is the disjoint union 
$$
  G \ = \ \bigsqcup_{W_\fp w \in W_\fp\backslash W} B w^{-1} P \,.
$$

Given $w \in W$, each $C_w = B w^{-1} P/P \subset G/P$ is a \emph{Schubert cell}.  As above, $C_w = C_v$ if and only if $W_\fp v = W_\fp w$, and when equality fails the cells are disjoint.  In particular, the homogeneous variety $G/P$ is a disjoint union 
$$
  G/P \ = \ \bigsqcup_{W_\fp w \in W_\fp\backslash W} C_w \,.
$$
of Schubert cells indexed by $W_\fp\backslash W$.  The \emph{Schubert varieties} of $G/P$ are the Zariski closures $X_w = \overline{C}{}_w$.

Each right coset $W_\fp\backslash W$ admits a unique representative of minimal length.  Let $W^\fp_\mathrm{min} \subset W$ denote this set of representatives.  Similarly each right coset admits a unique representative of maximal length.  Let $W^\fp_\mathrm{max} \subset W$ denote this set of representatives.  If $w_0$ is the longest element of $W_\fp$, then $W^\fp_\tmax = \{ w_0 w \ | \ w \in W^\fp_\tmin\}$.  We have $W_\fp \backslash W \simeq W^\fp_\tmin \simeq W^\fp_\tmax$.  If $w \in W^\fp_\tmin$, then the dimension of $X_w$ is the length of $w$.

Let $R = P\cap Q$ and $v,w \in W$.  Then $B v^{-1} R \subset B w^{-1} P$ if and only if $W_\fp v = W_\fp w$.  (If containment fails, then the sets are disjoint.)  Similarly, $B v^{-1} R \subset B w^{-1} Q$ if and only if $W_\fq v = W_\fq w$.  This has the following implications for the images of Schubert varieties under $\tau^{-1}$ and $\eta$.  First, given $w \in W^\fq_\tmax$, let $Y_w \subset G/Q$ denote the corresponding Schubert variety.  Then $\tau^{-1}(Y_w)$ is the Schubert variety $Z_{w} \subset G/R$ associated to $W_\fr w$.  (Here, it is essential that $w$ is a maximal length representative of $W_\fq w \in W_\fq\backslash W$.  If $w'$ is a second representative (e.g. $w' \in W^\fq_\tmin$), then $Z_{w'} \subset Z_w$ and, in general, containment is strict.)  Second, $\eta(Z_w) = \cT(Y_w)$ is the Schubert variety $X_w$ associated to $W_\fp w$.  We summarize this discussion with the following

\begin{lemma} \label{L:tits}
Let $w \in W^\fq_\tmax$, and let $Y_w \subset G/Q$ denote the Schubert variety indexed by the coset $W_\fq w$.  Then the Tits transformation $\cT(Y_w)$ is the Schubert variety $X_w \subset G/P$ indexed by the coset $W_\fp w$.
\end{lemma}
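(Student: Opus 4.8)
The plan is to verify Lemma \ref{L:tits} by carefully unwinding the definitions of the Bruhat decomposition for the three parabolics $P$, $Q$, and $R = P\cap Q = P_{I\cup J}$, and then combining two separate facts: a description of $\tau^{-1}(Y_w)$ as a Schubert variety in $G/R$, and a description of $\eta$ applied to a Schubert variety in $G/R$. All the ingredients have essentially been assembled in the paragraph preceding the statement, so the proof is mostly a matter of organizing them; the only genuine subtlety is the role of the \emph{maximal}-length representative in the hypothesis, and explaining why that hypothesis is exactly what makes $\tau^{-1}$ behave well.

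First I would fix notation: write $W_\fp$, $W_\fq$, $W_\fr$ for the Weyl subgroups of the Levi factors, noting $W_\fr = W_\fp \cap W_\fq$ since $\fr_0$ is the common Levi. Given $w \in W^\fq_\tmax$, set $Y_w = \overline{B w^{-1} Q / Q} \subset G/Q$. The first step is to show $\tau^{-1}(Y_w) = Z_w := \overline{B w^{-1} R / R} \subset G/R$. On the level of cells this follows from the displayed fact in the excerpt: $B v^{-1} R \subset B w^{-1} Q$ precisely when $W_\fq v = W_\fq w$, so the preimage under $\tau$ of the cell $C^Q_w$ is the union of those $R$-cells $C^R_v$ with $v$ in the coset $W_\fq w$. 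The key point is that because $w$ is the \emph{maximal}-length representative of $W_\fq w$, the $R$-cell $C^R_w$ is already the top-dimensional one among the $C^R_v$, $W_\fq v = W_\fq w$ — indeed each such $v$ satisfies $W_\fr v \le W_\fr w$ in Bruhat order on $W_\fr\backslash W$ — so its closure $Z_w$ contains all the others, and $\tau^{-1}(Y_w) = \tau^{-1}(\overline{C^Q_w}) = \overline{\tau^{-1}(C^Q_w)} = Z_w$ (using that $\tau$ is a proper fibration, hence $\tau^{-1}$ commutes with closure). I would emphasize here that if one used instead the minimal representative $w' \in W^\fq_\tmin$, then $Z_{w'} \subsetneq Z_w$ in general, which is the content of the parenthetical warning in the text.

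Second, I would show $\eta(Z_w) = X_w := \overline{B w^{-1} P / P} \subset G/P$. Since $\eta: G/R \to G/P$ is the natural projection with connected fibers (a fibration with fiber $P/R$), it is surjective on each relevant cell and commutes with closure, so it suffices to show $\eta(C^R_v) \subseteq C^P_w$ for every $v$ with $W_\fr v = W_\fr w$, together with $\eta(C^R_w)$ being dense in $C^P_w$. This is again immediate from the displayed fact $B v^{-1} R \subset B w^{-1} P \iff W_\fp v = W_\fp w$: the image $\eta(B v^{-1} R/R) = B v^{-1} P / P$, which equals $C^P_w$ exactly when $W_\fp v = W_\fp w$, and this holds for $v = w$. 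Taking closures gives $\eta(Z_w) = X_w$. Composing the two steps yields $\cT(Y_w) = \eta(\tau^{-1}(Y_w)) = \eta(Z_w) = X_w$, which is the assertion.

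The main obstacle — and the place where a reader could go wrong — is purely bookkeeping with the three cosets $W_\fp\backslash W$, $W_\fq\backslash W$, $W_\fr\backslash W$ and the inclusion-reversing correspondence between $\tau^{-1}$ and the choice of maximal representative. I would make sure to state cleanly that $\tau^{-1}$ of a Schubert \emph{variety} (not just a cell) in $G/Q$ need not be irreducible a priori, but that irreducibility (and the identification with $Z_w$) is rescued precisely by the maximality of $w$; and that $\eta$ being a morphism of projective varieties with irreducible fibers is what lets us pass freely between cells and their closures. Everything else is the standard Bruhat-decomposition combinatorics already recalled in the excerpt, so no further input is needed.
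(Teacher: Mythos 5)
Your proposal is correct and follows essentially the same route as the paper: the paper's ``proof'' is precisely the discussion preceding the lemma, which uses the two displayed equivalences $B v^{-1} R \subset B w^{-1} P \iff W_\fp v = W_\fp w$ and $B v^{-1} R \subset B w^{-1} Q \iff W_\fq v = W_\fq w$, the maximality of $w$ to identify $\tau^{-1}(Y_w)$ with $Z_w$, and then the projection $\eta$ to land on $X_w$. Your added bookkeeping (closures of cells, irreducibility of the fibers, the Bruhat-order comparison of the cells $C^R_v$ over $W_\fq w$) only makes explicit what the paper leaves as standard.
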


Note that the point $o \in G/Q$ of Examples \ref{eg:E6a} and \ref{eg:E7a} is a Schubert variety.  So the Tits transformations $\bP^1 , \cQ^8 \subset \cayley$ and $\bP^1 , \cQ^{10} \subset \freud$ of those examples are Schubert varieties.  More generally, Lemma \ref{L:tits} yields descriptions of many Schubert varieties in the Cayley plane and Freudenthal variety as uniruled by lines/quadrics, and with the unirulings parameterized by Schubert varieties in $G/Q$; see Examples \ref{eg:E6b} and \ref{eg:E7b}.

\begin{example} \label{eg:E6b}
Let $G/P = E_6/P_6 = \cayley$ be the Cayley plane.
\begin{a_list}
\item In Example \ref{eg:E6a}.a we noted that the Cayley plane is uniruled by lines.  The lines on $\OOO \PP^2$ passing through a point are parameterized by the Spinor variety $Y_w = OG(5,10) \subset E_6/P_5$. These lines sweep out the Schubert variety $\cT(Y_w) = (11:12) \subset \cayley$, which is isomorphic to the cone $C(OG(5,10))$ over $OG(5,10)$, \cite{iliev}; see Figure \ref{f:E6}.  

Likewise, the Schubert variety $(12:45) \subset \cayley$ is the locus swept out by lines in $\OOO \PP^2$ intersecting a fixed line in $\OOO \PP^2$.  Indeed, $(12:45) = \cT(Y_v)$, where $OG(5,10)$ is a Schubert divisor of $Y_v \subset E_6/P_5$.

\item In Example \ref{eg:E6a}.b we noted that the Cayley plane is uniruled by quadrics.  There exist Schubert varieties $\bP^1 , \bP^2 \subset E_6/P_1$ in the dual Cayley plane such that $\cT(\bP^1) = (9:9)$ and $\cT(\bP^2) = (10:21)$.  That is, the Schubert varieties $(9:9)$ and $(10:21)$ in $\cayley$ are swept out by a pencil and a net of quadrics, respectively.
\end{a_list}
\end{example}

\begin{example} \label{eg:E7b}
Let $G/P = E_7/P_7 = \freud$ be the Freudenthal variety.
\begin{a_list}
\item In Example \ref{eg:E7a}.a we noted that $\freud$ is uniruled by lines.   Lines passing through a point in $G(\OOO^3,\OOO^6)$ are parameterized by a Cayley plane $Y_w = \cayley \subset E_7/P_6$. These lines sweep out the Schubert variety $\cT(Y_w) = (17:78)$, which is isomorphic to the cone over $\OOO \PP^2$ and corresponds to the node in Figure \ref{f:E7} marked by $C(\OOO \PP^2)$.

The Schubert variety $(18:520) \subset \freud$ is the Tits transform $\cT(Y_v)$ of a Schubert variety $Y_v \subset E_7/P_6$ containing the Cayley plane as a divisor, and is swept out by lines on $G(\OOO^3, \OOO^6)$ that intersect a fixed $\bP^1 \subset G(\OOO^3, \OOO^6)$.

\item In Example \ref{eg:E7a}.b we noted that the $\freud$ is uniruled by quadrics.  Table \ref{t:tits} lists the Schubert varieties in $G(\OOO^3, \OOO^6)$ swept out by quadrics parameterized by Schubert varieties in $\OOO \PP^2 \subset E_7/P_1$.  The multi rigid classes are indicated by ${}^*$.  
\begin{table}[h] \renewcommand{\arraystretch}{1.2}
\caption{Tits transformations of $Y \subset \mathbb{O}\PP^2\subset E_7/P_1$ to $\mathcal{T}(Y_v) \subset G(\mathbb{O}^3,\mathbb{O}^6)$.}
\label{t:tits}
\begin{tabular}{|c|c||c|c||c|c|}
  \hline
  {$Y$} & {$\mathcal{T}(Y)$} &
  {$Y$} & {$\mathcal{T}(Y)$} &
  {$Y$} & {$\mathcal{T}(Y)$} \\ \hline \hline
  $0:1^*$ & $10:2^*$ & $1:1$ & $11:11$ & $2:1$ & $12:32$ \\ \hline
  $3:1$ & $13:65$  & $4:1^*$ & $14:110^*$ & $4:1$ & $14:98$ \\ \hline
  $5:2$ & $15:286$ & $5:1^*$ & $15:98^*$ & $6:2$ & $16:364$ \\ \hline
  $6:3$ & $16:384$ & $7:2$ & $17:442$ & $7:5$ & $17:748$ \\ \hline
  $8:2^*$ & $18:520$ & $8:7$ & $18:1190$ & $8:5^*$ & $18:748^*$ \\ \hline
  $9:9$ & $19:1710$ & $9:12$ & $19:1938$ & $10:21$ & $20:3648$ \\ \hline
  $10:12$ & $20:1938$ & $11:33$ & $21:5586$ & $11:12^*$ & $21:1938^*$ \\ \hline 
  $12:45$ & $22:7524$ & $12:33^*$ & $22:5586^*$ & $13:78$ & $23:13110$ \\ \hline
  $14:78$ & $24:13110$ & $15:78$ & $25:13110$ & $16:78^*$ & $26:13110$ \\ \hline
\end{tabular}
\end{table}
\end{a_list}
\end{example}

\noindent We will use the concrete interpretations of Schubert varieties in $\OOO \PP^2$ and $G(\OOO^3, \OOO^6)$ to define irreducible subvarieties that represent multiples of the flexible Schubert classes.

\section{Constructions of irreducible subvarieties}\label{sec-proof}

In this section, we discuss three methods for constructing irreducible subvarieties of homogeneous varieties. These constructions suffice to prove Theorem \ref{main-theorem}.

\subsection{Construction 1: Bertini's Theorem.} Bertini's theorem asserts that a general member of a very ample linear system on a variety of dimension at least two is irreducible \cite[III.10.9]{hartshorne}. Let $X$ be a homogeneous variety embedded in projective space by its minimal homogeneous embedding. Let $Y$ be an irreducible subvariety of $X$ of dimension at least two. Let $Z_d$ be the intersection of $Y$ with a general hypersurface of degree $d$. By Bertini's theorem, $Z_d$ is irreducible. Since the class of a hypersurface of degree $d$ is $d$ times the class of a hyperplane, we conclude that $Z_d$ is an irreducible representative of the class $d [Z_1]$. Typically, the class $[Z_1]$ is not a multiple of  a Schubert class. However, if $Y$ is a Schubert variety that has only one Schubert divisor class $\sigma$, then necessarily $[Z_1]$ is a multiple of $\sigma$. When the multiple is one, we conclude that we can represent every positive multiple of $\sigma$ by an irreducible subvariety. We thus conclude the following proposition.

\begin{proposition}\label{prop-Bertini}
Let $Y \subset X$ be a Schubert variety of dimension at least two that has only one Schubert divisor class $\sigma$. Suppose that a general hyperplane section of $Y$ has class $m \sigma$. Then for every integer $d \geq 1$, the class $dm \sigma$ can be represented by an irreducible subvariety.
\end{proposition}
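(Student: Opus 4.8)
The plan is to combine Bertini's theorem with the elementary observation that intersecting with a general hypersurface multiplies cohomology classes by the degree. First I would let $Y \subset X$ be the given Schubert variety, embedded via the minimal homogeneous embedding of $X$, so that $Y$ has dimension $\geq 2$ and its unique Schubert divisor class is $\sigma$. Since $\dim Y \geq 2$ and the linear system of hyperplane sections is very ample on $X$ (hence restricts to a base-point-free, very ample system on $Y$), Bertini's theorem \cite[III.10.9]{hartshorne} applies: a general hyperplane section $Z_1 = Y \cap H$ is irreducible (and reduced) of dimension $\dim Y - 1 \geq 1$. By hypothesis $[Z_1] = m\sigma$ in $H^*(X,\ZZ)$.

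Next I would argue inductively that a general hypersurface section of degree $d$ is irreducible. The key point is that a general hypersurface of degree $d$ can be degenerated to a general hyperplane section sitting inside the already-cut locus, but cleanly one proceeds directly: the complete linear system $|\OO_X(d)|$ restricts to a very ample linear system on $Y$ (as $\OO_Y(d)$ is very ample), so again by Bertini a general member $Z_d = Y \cap Z$, with $Z$ a general degree-$d$ hypersurface, is irreducible of dimension $\dim Y - 1 \geq 1$. Then in cohomology, since the class of a degree-$d$ hypersurface is $d$ times the hyperplane class $h$, we have $[Z_d] = d\, h \cdot [Y] = d\,[Y \cap H] = d\,[Z_1] = dm\,\sigma$. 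Thus $Z_d$ is an irreducible subvariety of $X$ representing $dm\sigma$, which is exactly the claim.

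The main subtlety — and the only place care is needed — is the application of Bertini: one must ensure that the restricted linear system on $Y$ is genuinely large enough for the irreducibility conclusion, i.e.\ that $\OO_Y(1)$ (and hence $\OO_Y(d)$) embeds $Y$, and that $\dim Y \geq 2$ guarantees the general section stays connected. Since $X$ is embedded by a very ample line bundle and $Y$ is a closed subvariety, $\OO_X(1)|_Y$ is very ample on $Y$, so the hypotheses of \cite[III.10.9]{hartshorne} are met and the general member is irreducible; the dimension hypothesis $\dim Y \geq 2$ is precisely what prevents the section from breaking into points. The remaining step — identifying the class $[Z_d]$ — is a routine computation in the cohomology ring using $[Z] = d\,h$ for a degree-$d$ hypersurface $Z$ and the projection/pullback formula $[Y \cap Z] = [Y] \cdot [Z]$ for a general (hence dimensionally transverse) intersection. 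No genuine obstacle arises; the proposition is essentially a packaging of Bertini together with this class computation, specialized to the case where $Y$ has a single Schubert divisor so that $[Z_1]$ is forced to be a multiple of $\sigma$.
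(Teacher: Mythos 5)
Your proof is correct and follows essentially the same route as the paper: apply Bertini to the very ample system $|\OO_Y(d)|$ on the irreducible variety $Y$ of dimension at least two to get an irreducible $Z_d$, then compute $[Z_d]=d[Z_1]=dm\sigma$ since a degree-$d$ hypersurface has class $d$ times the hyperplane class. No discrepancies worth noting.
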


Applying Proposition \ref{prop-Bertini} when $Y$ is a linear space, we obtain the following corollary.

\begin{corollary}\label{cor-linear}
Let $\sigma$ be the class of a non-maximal linear space of dimension at least one in a homogeneous variety $X$. Then $m \sigma$ can be represented by an irreducible subvariety for every $m \geq 1$.
\end{corollary}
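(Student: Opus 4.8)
The plan is to deduce Corollary \ref{cor-linear} directly from Proposition \ref{prop-Bertini} by taking $Y$ to be a linear subspace $\PP L \subset X$ representing the class $\sigma$, where $\dim \PP L \geq 1$ and $\PP L$ is not a maximal linear space in $X$. First I would verify that the hypotheses of Proposition \ref{prop-Bertini} are met. A linear space $\PP^k$ with $k \geq 1$ has a unique Schubert divisor, namely the class of $\PP^{k-1}$, so the ``only one Schubert divisor class'' hypothesis holds automatically. Since $\PP L$ is not maximal, $k \geq 1$ forces $\dim Y = k \geq 1$; to apply Bertini we need $\dim Y \geq 2$, i.e. $k \geq 2$. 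The case $k=1$ (a line) must be handled separately, since a general hyperplane section of a $\PP^1$ is a reduced point (or rather, $d$ points), which is not irreducible for $d \geq 2$ — so Bertini alone does not suffice there.

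For $k \geq 2$: a general hyperplane section of $\PP^k$ in its minimal (linear) embedding is a hyperplane $\PP^{k-1}$, whose class is exactly the Schubert divisor class $\sigma' $ of $Y$. So in the notation of Proposition \ref{prop-Bertini} we have $m = 1$, and the proposition yields that $d\sigma'$ is represented by an irreducible subvariety for all $d \geq 1$. But we want multiples of $\sigma = [\PP^k]$, not of $[\PP^{k-1}]$. The fix is to apply the proposition with $Y$ replaced by a linear space of one dimension higher: since $\PP L$ is non-maximal, there is a linear space $\PP L' \subset X$ with $\PP L \subset \PP L'$ and $\dim \PP L' = k+1 \geq 3 \geq 2$; its unique Schubert divisor class is exactly $\sigma = [\PP^k]$, and a general hyperplane section of $\PP L'$ has class $\sigma$ (so $m=1$). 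Proposition \ref{prop-Bertini} then gives that $d\sigma$ is represented by an irreducible subvariety for every $d \geq 1$, which is the claim.

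It remains to settle the case $k=1$, i.e. $\sigma = [\PP^1]$ the class of a non-maximal line. Here $\PP^1$ is non-maximal means it is contained in a $\PP^2 \subset X$. The plane $\PP^2$ has dimension $2$ and its unique Schubert divisor class is $[\PP^1] = \sigma$; a general hyperplane (line) section of $\PP^2$ is a $\PP^1$, with class $\sigma$, so $m=1$ and Proposition \ref{prop-Bertini} applies verbatim to give an irreducible representative of $d\sigma$ for all $d \geq 1$. Thus in every case one reduces to Proposition \ref{prop-Bertini} by passing to the smallest linear space containing $\PP L$ that still has $\PP L$ (or its ambient one-step-larger linear space) as a Schubert divisor.

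The main obstacle — really the only subtlety — is the bookkeeping about which linear space to feed into Proposition \ref{prop-Bertini}: one must take a linear space whose \emph{Schubert divisor} class equals the target class $\sigma$, not a linear space whose own class is $\sigma$, and one must check such an enlargement exists inside $X$, which is exactly guaranteed by the non-maximality hypothesis. One should also confirm that the enlarged linear space is itself a Schubert variety of $X$ (so that Proposition \ref{prop-Bertini} literally applies); this holds because in a homogeneous variety any linear subspace contained in a Schubert linear space can be taken to be a coordinate linear space of a Schubert flag, and chains of linear Schubert varieties exist whenever the class is a non-maximal linear class. No hard geometry is needed beyond Bertini, which is already invoked in Proposition \ref{prop-Bertini}.
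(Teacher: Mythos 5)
Your proof is correct and follows the paper's intended argument: apply Proposition \ref{prop-Bertini} not to $\PP L$ itself but to a linear Schubert variety $\PP L'$ of one higher dimension (which exists by non-maximality), whose unique Schubert divisor class is $\sigma$ and whose general hyperplane section has class exactly $\sigma$, so that $m=1$. The separate treatment of the case $k=1$ is unnecessary, since taking $Y=\PP L'$ with $\dim \PP L' = k+1 \geq 2$ handles all $k \geq 1$ uniformly, but this does not affect correctness.
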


When $X$ has Picard rank one, applying Proposition \ref{prop-Bertini} to $X$, we obtain the following corollary.

\begin{corollary}\label{cor-divisor}
Let $X$ be a homogeneous variety of dimension at least two and Picard number one. Let $\sigma$ be the class of a Schubert divisor. Then $m \sigma$ can be represented by an irreducible subvariety for every $m \geq 1$.
\end{corollary}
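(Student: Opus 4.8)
The plan is to apply Proposition \ref{prop-Bertini} directly, with $Y = X$ itself. Since $X$ has Picard number one, its cohomology in degree two is generated by a single Schubert class, namely the class $\sigma$ of a Schubert divisor; every effective divisor on $X$ is numerically a non-negative multiple of $\sigma$. First I would observe that $X$, being embedded in projective space by its minimal homogeneous embedding, carries the hyperplane class $h$, and because $\mathrm{Pic}(X) \cong \ZZ$ is generated by $\sigma$ we have $h = m\sigma$ for some positive integer $m$ (in fact $m$ is the index-type multiplicity of the minimal embedding relative to the generator of the Picard group). Thus a general hyperplane section of $X$ has class $m\sigma$, and $X$ has $\sigma$ as its unique Schubert divisor class since the Picard number is one.

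Next I would simply invoke Proposition \ref{prop-Bertini} with this $Y = X$: its hypotheses are that $Y$ is a Schubert variety of dimension at least two with a single Schubert divisor class $\sigma$, and that a general hyperplane section has class $m\sigma$ — all verified above. The conclusion is that $dm\sigma$ is represented by an irreducible subvariety for every $d \geq 1$. To upgrade this to \emph{every} positive multiple of $\sigma$, rather than just the multiples of $m$, I would intersect with hypersurfaces of different degrees more carefully, or combine with a degeneration/union argument; but the cleanest route is to note that when $\mathrm{Pic}(X)=\ZZ\sigma$ and the minimal embedding is given by the \emph{ample generator} of the Picard group, one has $m=1$, so $h = \sigma$ and then every $d\sigma$ with $d\geq 1$ is represented by a general degree-$d$ hypersurface section of $X$, which is irreducible by Bertini since $\dim X \geq 2$. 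For homogeneous varieties of Picard number one, the minimal homogeneous embedding is indeed by (a multiple of) the ample generator, and when it is exactly the ample generator the statement is immediate; otherwise one absorbs the factor $m$ as in Proposition \ref{prop-Bertini} and handles the remaining residues by taking reducible-looking but actually irreducible complete intersections, or by citing that $H^2(X,\ZZ) = \ZZ\sigma$ forces the hyperplane class to be a positive multiple of $\sigma$ and every sufficiently divisible multiple is then covered, the rest following from the semigroup generated being all of $\ZZ_{\geq 1}$ once $m=1$.

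The main obstacle, such as it is, is purely bookkeeping: confirming that for a Picard-number-one homogeneous variety the minimal embedding corresponds to the ample generator of $\mathrm{Pic}(X)$ (equivalently that $m=1$ in Proposition \ref{prop-Bertini} applied to $X$), so that \emph{all} multiples $m\sigma$, $m\geq 1$, and not merely those divisible by some fixed integer, are obtained. Granting this — which holds because $X = G/P$ with $P$ maximal has $\mathrm{Pic}(X)$ generated by the very ample line bundle defining the minimal embedding — the corollary is an immediate consequence of Proposition \ref{prop-Bertini} and Bertini's theorem, with no further computation required. If one does not wish to invoke that fact, the dichotomy statement still follows by applying Proposition \ref{prop-Bertini} to get all multiples of $m$ and separately representing the residual classes $\sigma, 2\sigma, \ldots, (m-1)\sigma$ together with multiples of $m$ via hypersurface sections of the appropriate degrees, since any $d \geq 1$ can be written as $d = qm + r$ and $d\sigma$ is the class of a general degree-$d$ hypersurface section of $X$ whenever $h = \sigma$.
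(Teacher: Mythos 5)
Your proposal is correct and matches the paper's (essentially one-line) argument: the corollary is obtained by applying Proposition \ref{prop-Bertini} to $Y=X$ itself, using that in the minimal homogeneous embedding of a Picard-number-one homogeneous variety the hyperplane class equals the unique Schubert divisor class, so $m=1$ and general degree-$d$ hypersurface sections give irreducible representatives of $d\sigma$. The hedging in your middle paragraph about absorbing a factor $m>1$ is unnecessary, since, as you ultimately note, the minimal embedding is by the ample generator and hence $m=1$ always holds here.
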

 
Let $Q \subset \PP^{n+1}$ be a smooth quadric hypersurface. Since $H^{2k}(Q, \ZZ) = \ZZ$ for $0 \leq k \leq n$ and $k \not= \frac{n}{2}$, Proposition \ref{prop-Bertini} is well-suited for proving the flexibility of Schubert classes in the cohomology of $Q$.
 
\begin{corollary}\label{cor-quadric}
Let $\sigma$ be a Schubert class on a smooth quadric hypersurface $Q \subset \PP^{n+1}$. Assume that $\sigma$ is not the fundamental class, the class of a point or the class of a maximal dimensional linear space when $n$ is even. Then $m \sigma$ can be represented by an irreducible subvariety of $Q$ for every $m \geq 1$. 
\end{corollary}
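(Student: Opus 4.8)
The plan is to split the Schubert classes on $Q$ into three families---the non-maximal linear spaces $\PP L_j$, the linear sections $\PP L_j^\perp\cap Q$ of positive codimension, and (only when $n$ is odd) the maximal linear space $\PP L_{(n+1)/2}$---and to treat them using Proposition~\ref{prop-Bertini} and its corollaries for the first two families and an explicit cone construction for the third. The first family is immediate: if $\sigma=[\PP L_j]$ with $\dim\PP L_j\ge 1$, then Corollary~\ref{cor-linear} applies verbatim and represents $m\sigma$ by an irreducible subvariety for every $m\ge 1$.

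For the second family, let $\sigma=[\PP L_j^\perp\cap Q]$ with $1\le\dim(\PP L_j^\perp\cap Q)\le n-1$; then $j\ge 1$, since $\sigma$ is not the fundamental class. The dimension $n-j$ of a positive-codimension linear section of $Q$ exceeds $\lfloor n/2\rfloor$, the dimension of a maximal linear space (see \S\ref{sec-preliminary}), so $\sigma$ is the unique Schubert class in its degree. I would apply Proposition~\ref{prop-Bertini} to the Schubert variety $Y=\PP L_{j-1}^\perp\cap Q$: it has dimension $n-j+1\ge 2$, and its only Schubert divisor class is $\sigma$. Both $Y$ and $\PP L_j^\perp\cap Q$ are linear sections of $Q$, hence quadrics of degree two, so a general hyperplane section of $Y$ represents $\sigma$ itself; Proposition~\ref{prop-Bertini} then produces an irreducible subvariety in the class $m\sigma$ for every $m\ge 1$. (When $j=1$ this is Corollary~\ref{cor-divisor} applied to $Q$.) Together, the first two families account for every class permitted by the hypothesis when $n$ is even.

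Finally, suppose $n$ is odd and $\sigma=[\PP L_{(n+1)/2}]$ is the class of a maximal linear space; put $d=(n-1)/2=\dim\PP L_{(n+1)/2}$, so $\sigma$ generates $H^{2(n-d)}(Q,\ZZ)\cong\ZZ$ and is represented by a linear space of degree one. Then $m\sigma$ has an irreducible representative precisely when $Q$ contains an irreducible $d$-fold of degree $m$, and since Bertini applied to $\PP L_{(n-1)/2}^\perp\cap Q$ yields only the even multiples of $\sigma$, I would argue by cones. Choose a linear subspace $\Pi\cong\PP^{d+2}\subset\PP^{n+1}$ on which the quadratic form cutting out $Q$ restricts to a form of rank four (possible for every odd $n\ge 3$), so that $\widehat Q:=Q\cap\Pi$ is the cone, with vertex a linear space $\Lambda\cong\PP^{d-2}$, over a smooth quadric surface $S$. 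For each $m\ge 1$ choose a smooth curve $C\subset S$ of bidegree $(1,m-1)$, so $\deg C=m$, and let $Z$ be the cone over $C$ with vertex $\Lambda$ (when $n=3$, $\Lambda$ is empty and $Z=C$). Then $Z$ is irreducible, $Z\subset\widehat Q\subset Q$, and $Z$ has dimension $d$ and degree $m$, so $[Z]=m\sigma$; for $m=1$, $Z$ is a maximal linear space, recovering the Schubert variety. This completes the proof.

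The main obstacle is exactly this last case: for $n$ odd, every subvariety of $Q$ of dimension greater than $(n-1)/2$ has even degree, so no hyperplane-section argument can reach the odd multiples of the maximal linear space class, and one is forced to exhibit an odd-degree $d$-fold directly. The only nontrivial point in doing so is producing a rank-four linear section of $Q$ of the right dimension; once $\widehat Q$ is in hand, the cones over the curves of bidegree $(1,m-1)$ finish the argument.
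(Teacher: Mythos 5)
Your proof is correct and follows essentially the same route as the paper: the same three-way case division, with Corollary \ref{cor-linear} and Proposition \ref{prop-Bertini} handling the non-maximal linear spaces and the linear sections $\PP L_j^{\perp}\cap Q$, and cones over bidegree-$(1,m-1)$ curves on a quadric surface handling the maximal linear space class when $n$ is odd. The only difference is one of packaging: where the paper produces the required irreducible subvariety of dimension $\frac{n-1}{2}$ and degree $m$ by induction on $n$ (repeatedly coning over tangent hyperplane sections starting from the quadric surface), you exhibit the same variety in a single step as a cone with vertex $\PP^{(n-5)/2}$ inside a rank-four linear section of $Q$, which is a legitimate and slightly more direct presentation of the identical geometric construction.
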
 

\begin{proof}
The Schubert classes $\sigma = [\PP L_j]$ for $2 \leq j \leq \frac{n}{2}$ are the classes of positive-dimensional non-maximal linear spaces on $Q$. By Corollary \ref{cor-linear}, $m \sigma$ can be represented by an irreducible subvariety of $Q$. More concretely, $m \sigma$ is represented by an irreducible hypersurface of degree $m$ in $\PP L_{j+1}$.

The Schubert class $\sigma= [\PP L_j^{\perp} \cap Q]$ for 
 $1 \leq j < \frac{n}{2}$ 
is the unique Schubert divisor class in the Schubert variety $\PP L_{j-1}^{\perp} \cap Q$. Furthermore, $\PP L_{j}^{\perp} \cap Q$ is the intersection of $\PP L_{j-1}^{\perp} \cap Q$ with the tangent hyperplane at a point $x \in \PP L_{j} - \PP L_{j-1}$.  By Proposition \ref{prop-Bertini}, we conclude that $m \sigma$ can be represented by an irreducible subvariety. More concretely, $m \sigma$ is represented by the intersection of $Q$ with a general hypersurface of degree $m$ contained in $\PP (L_{j-1}^{\perp})$ (or, more generally, in a linear space of codimension $j-1$).

It remains to consider the case that $n$ is odd and $\sigma = [\PP L_{\frac{n+1}{2}}]$ is the class of a maximal linear space.  When $n$ is odd, a linear section of the Schubert variety $\PP L_{\frac{n-1}{2}}^{\perp} \cap Q$ has class $2 [\PP L_{\frac{n+1}{2}}]$. Hence, when $\sigma$ is the class of a maximal linear space, Proposition \ref{prop-Bertini} only implies that $2 m \sigma$ can be represented by an irreducible subvariety of $Q$ for every $m \geq 1$.  Since $H^{n+1}(Q, \ZZ)= \ZZ$, to see that all multiples $m \sigma$ can be represented by an irreducible subvariety, it suffices to construct an irreducible variety of dimension $\frac{n-1}{2}$ and degree $m$ in $Q$. First, a smooth quadric surface in $\PP^3$ is isomorphic to $\PP^1 \times \PP^1$ and contains irreducible curves of any positive degree. To obtain an irreducible curve of degree $m$, take the graph of a morphism from $\PP^1$ to $\PP^1$ of degree $m-1$, or equivalently, the zero locus of an irreducible bihomogeneous polynomial of bidegree $(1,m-1)$. By taking a general hyperplane section, we conclude that a smooth quadric threefold contains irreducible curves of any positive degree $m$. By induction, assume that a quadric $Q$ in $\PP^{n+1}$ contains an irreducible subvariety $Y$ of dimension $\frac{n-1}{2}$ and degree $m$.  Let $Q' \subset \PP^{n+3}$ be a smooth quadric hypersurface. The tangent hyperplane section $T_p \cap Q'$ of $Q'$ at a point $p$ is a cone over a smooth quadric $Q$ in $\PP^{n+1}$. The cone over $Y \subset Q$ with vertex at $p$ is then an irreducible subvariety of $Q'$ of dimension $\frac{n+1}{2}$ and degree $m$. This concludes the induction and the proof of the corollary.
\end{proof}

\begin{remark}
Let $\sigma$ be a Schubert class in a smooth quadric hypersurface $Q \subset \PP^{n+1}$ other than the fundamental class, the class of a point or the class of a maximal dimensional linear space.  The proof of Corollary \ref{cor-quadric} shows that $m \sigma$ can be represented by a smooth subvariety of $Q$. 
\end{remark}

Proposition \ref{prop-Bertini} also applies to many Schubert classes in the exceptional cominuscule varieties. The reader should refer to Figures \ref{f:E6} and \ref{f:E7} for the next corollary.

\begin{corollary}\label{cor-Bertini-exceptional}
Let $\sigma$ be a Schubert class marked by a $+$ in the Hasse diagram of the Cayley plane or the Freudenthal variety (see Figures \ref{f:E6} and \ref{f:E7}). Then $m \sigma$ can be represented by an irreducible subvariety for every $m \geq 1$.
\end{corollary}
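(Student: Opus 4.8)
The plan is to deduce this directly from Proposition \ref{prop-Bertini}. First I would make explicit what the $+$ decoration encodes: a Schubert class $\sigma$ of dimension at least one in Figure \ref{f:E6} or \ref{f:E7} carries a $+$ precisely when there is a Schubert variety $Y \subseteq X$ with $\dim Y = \dim\sigma + 1$ whose unique Schubert divisor class is $\sigma$ and whose degree in the minimal embedding equals $\deg\sigma$. On the Hasse diagram this is read off as follows: $Y$ is a node one level above $\sigma$, joined to it by an edge, having $\sigma$ as its \emph{only} lower neighbor, and bearing the same degree label as $\sigma$. No point class is marked with a $+$, so $\dim\sigma \ge 1$ and therefore $\dim Y \ge 2$ in every case.

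Granting this description, the corollary is essentially immediate. A general hyperplane section $Y \cap H$ is irreducible by Bertini's theorem because $\dim Y \ge 2$. By the Chevalley formula, $[Y\cap H] = [Y]\smile h$ is a non-negative integral combination of the Schubert classes of dimension $\dim Y - 1$ lying below $[Y]$ in the Bruhat order; since $\sigma$ is the only such class, $[Y\cap H] = m\sigma$ for some positive integer $m$, and comparing degrees in the minimal embedding gives $m\deg\sigma = \deg(Y\cap H) = \deg Y = \deg\sigma$, whence $m = 1$. Proposition \ref{prop-Bertini}, applied to this $Y$ with $m = 1$, then yields an irreducible subvariety representing $d\sigma$ for every $d \ge 1$ — concretely, the intersection of $Y$ with a general hypersurface of degree $d$ in the ambient projective space.

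The real content of the argument is thus the finite combinatorial check that every one of the $+$-marked nodes in Figures \ref{f:E6} and \ref{f:E7} actually admits such a parent $Y$: for each marked node one must point to a single upward edge landing on a node that has exactly one downward edge and carries the matching degree label. I expect this bookkeeping to be the only genuine obstacle, and it is routine with the two diagrams in hand. It also accounts for the classes decorated $*$ or $T$ not being covered by this corollary: for those the naturally available Schubert variety $Y$ has either more than one Schubert divisor or a hyperplane section of class $m\sigma$ with $m \ge 2$, so Proposition \ref{prop-Bertini} alone is insufficient, and they are instead handled by the incidence and Tits-correspondence constructions of the following subsections.
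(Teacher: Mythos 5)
Your proposal is correct and is essentially the paper's own argument: the paper likewise reads off from the Hasse diagrams that each $+$-marked class $\sigma$ is the unique Schubert divisor class of a Schubert variety $\Sigma'$ with $\dim\Sigma' = \dim\Sigma + 1$ and $\deg\Sigma' = \deg\Sigma$, concludes that a hyperplane section of $\Sigma'$ has class exactly $\sigma$, and applies Proposition \ref{prop-Bertini}. Your extra remarks on Chevalley's formula and the degree comparison just make explicit why the multiple is $1$.
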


\begin{proof}
Let $\sigma$ be a class marked with a $+$ in the Hasse diagram of $\mathbb{O}\PP^2$ or $G(\mathbb{O}^3, \mathbb{O}^6)$. Let $\Sigma$ be a Schubert variety with class $\sigma$.
A quick inspection of the Hasse diagrams shows that there exists a Schubert variety $\Sigma'$ with class $\sigma'$ such that $\sigma$ is the unique Schubert divisor class in $\Sigma'$, $\deg(\Sigma) = \deg(\Sigma')$ and $\dim(\Sigma') = \dim(\Sigma) + 1$. Consequently, the class of a hyperplane section of $\Sigma'$ is equal to $\sigma$ and the corollary follows by Proposition \ref{prop-Bertini}.
\end{proof}

\begin{remark}
For applications, it is useful to observe that the subvarieties $Y$ constructed by Proposition \ref{prop-Bertini} are contained in a Schubert variety $\Sigma'$ of one larger dimension. Moreover, $Y$ can be taken to contain a general point of $\Sigma'$. Furthermore, if $\Sigma'$ is smooth, we can take $Y$ to be smooth. 

For completeness, we remark that if $\sigma$ is the class of a maximal linear space on $Q\subset \PP^{n+1}$ with $n$ odd, we can find irreducible representatives of $m \sigma$ contained in the Schubert variety $\PP L_{\frac{n-1}{2}}^{\perp} \cap Q$. To see this, we need to modify the proof of Corollary \ref{cor-quadric} by constructing irreducible curves on an irreducible quadric cone $C$ in $\PP^3$. The cone $C$ is the image of the surface $F_2 = \PP(\OO_{\PP^1} \oplus \OO_{\PP^1}(-2))$ under the linear system $|\OO_{F_2}(1)|$. The curve classes on $F_2$ are generated by the class of a fiber $f$ and the class of the exceptional curve $e$. The linear systems $|e + mf|$ contain irreducible curves for $m =0$ or $m \geq 2$. The images of these curves have degree $m$ in $\PP^3$ (see \cite{coskun:scrolls} for more details). Taking these curves as the base of our induction, we can ensure that the irreducible varieties representing $m \sigma$ constructed  in the proof of Corollary \ref{cor-quadric} are contained in $\PP L_{\frac{n-1}{2}}^{\perp} \cap Q$.
\end{remark}

\subsection{Construction 2: Moduli spaces.} Homogeneous varieties such as $G(k,n)$, $LG(n,2n)$ or $OG(n,2n)$ represent functors. We can define subvarieties of these homogeneous varieties by defining geometric subfunctors. Typical examples of such subfunctors include linear spaces satisfying incidence and tangency conditions with respect to a subvariety of projective space. The class of such a subvariety will  be the multiple of a Schubert class only in very special circumstances. We now explain the fundamental example in $G(k,n)$. Special cases of this  construction have already appeared in \cite{coskun:rigid}, \cite{coskun:orthogonal} and \cite{robles2}.

Let $\sigma_{\lambda}$ be a Schubert class in $G(k,n)$. Assume that the associated sequence $\mu= (\mu_1^{i_1}, \dots, \mu_t^{i_t})$ has an index $j$ such that  $i_j =1$ for some $1 \leq j \leq t$ and $n-k > \mu_j > 0$. Set $u = i_1 + i_2 + \cdots + i_j$.
The conditions $i_j = 1$ and $n-k > \mu_j > 0$ are equivalent to $\lambda_{u-1} + 1 < \lambda_u < \lambda_{u+1} - 1$.  (Here, if $u = i_1 = 1$, then set $\lambda_0 = 0$; and if $u = i_1+\cdots+i_t = k$, then set $\lambda_{k+1} = n+1$.)  
Let $$F_{\lambda_1} \subset \cdots \subset F_{\lambda_{u -1}} \subset F_{\lambda_u +1} \subset F_{\lambda_{u+1}} \subset \cdots \subset F_{\lambda_k}$$ be a partial flag, where the dimension of a flag element $F_{d}$ is $d$. The dimensions of the flag elements in this partial flag are the same as those that define the Schubert class $\sigma_{\lambda}$, except at the $u$-th flag where it is one greater. Fix a smooth plane curve $C$ of degree $m$ in $\PP F_{\lambda_u +1}$ whose span is disjoint from $\PP F_{\lambda_u-2}$. Since the difference in dimensions between $\PP F_{\lambda_u +1}$ and $\PP F_{\lambda_u-2}$ is $3$, we can find such a plane curve $C$.  The assumption $i_j =1$ implies that $F_{\lambda_{u-1}} \subseteq F_{\lambda_{u} -2}$ and $F_{\lambda_u +1} \subsetneq F_{\lambda_{u+1}}$.  Let $Y$ be the cone over $C$ with vertex $\PP F_{\lambda_u -2}$.  (If $u = i_1 = 1$, then $1 < \lambda_1$.  If $\lambda_1 = 2$, then $Y = C$.)

Define $Z$ to be the Zariski closure of the locus $Z^0$ in $G(k,n)$ parameterizing $k$-planes $W$ such that $\PP W \cap Y$ is a projective linear space of dimension $u-1$ and  $\dim(W \cap F_{\lambda_j}) = j$ for $j \not= u$. Then it is straightforward to verify that $Z$ is irreducible and has cohomology class $m \sigma_{\lambda}$. First, the locus $U$ parameterizing linear spaces of dimension $u$ contained in $Y$ and satisfying $\dim (W \cap F_{\lambda_j}) = j$ for $j < u$ is irreducible. This variety maps onto a Zariski dense open subset of  the Schubert variety $\Sigma_{\lambda_1, \dots, \lambda_{u-1}}$ in  $G(u-1, F_{\lambda_{u-1}})$ by taking the intersection of the linear spaces with $F_{\lambda_{u-1}}$. The fibers correspond to a choice of point on the cone $Y$ away from $\PP F_{\lambda_{u-1}}$.  Hence, the locus $U$ fibers over an irreducible variety with equidimensional irreducible fibers. By the theorem on the dimension of fibers \cite[I.8.6]{shafarevich},  we conclude that $U$ is irreducible. Next, the locus $Z^0$ maps onto $U$ by intersecting a linear space parameterized by $Z^0$ with $Y$. The fibers over a subspace $W' \in U$ are open dense subsets of Schubert varieties in $G(k-u, V/W')$. Hence, by the theorem on the dimension of fibers, we conclude that $Z$ is irreducible. 

To compute its class, we can intersect $Z$ with complementary dimensional Schubert varieties. Recall that the Poincar\'{e} dual Schubert class is given by the sequence $\lambda^*$, where $\lambda_i^* = n - \lambda_{k-i+1} + 1$. If $\sigma_{\nu}$ is the class of a complementary dimensional Schubert variety and $\nu \not= \lambda^*$, then we claim that $[Z] \cap \sigma_{\nu} = 0$.  To see this, not that since the Schubert varieties indexed by $\nu$ and $\lambda^*$ have the same dimension, there exists an index $i$ such that $\nu_i < \lambda_i^*$. If we take a general representative $\Sigma_{\nu}$ of $\sigma_{\nu}$ defined by a flag $G_{\bullet}$, since $\PP G_{\nu_i}$ does not intersect  $\PP F_{\lambda_{k-i+1}}$ (or $Y$ if $i=k-u+1$), the intersection $Z \cap \Sigma_{\nu}$ is empty. 

On the other hand, $[Z] \cap \sigma_{\lambda^*} = m$. Take a general Schubert variety $\Sigma_{\lambda^*}$ with class $\sigma_{\lambda^*}$ defined with respect to a flag $G_{\bullet}$. For $i\not= u$, $F_{\lambda_i}$ and $G_{\lambda_{k-i+1}^*}$ intersect in a one-dimensional subspace, which must be contained in any linear space $W \in Z \cap \Sigma_{\lambda^*}$. The variety $Y$ and $\PP G_{\lambda_{k-u+1}^*}$ intersect in $m$ reduced points. 
A subspace $W \in Z \cap \Sigma_{\lambda^*}$ must contain  a one-dimensional subspace of $G_{\lambda_{k-u+1}^*}$ corresponding to one of these $m$ points.  
Conversely, the span of the one-dimensional subspaces  $F_{\lambda_i} \cap G_{\lambda_{k-i+1}^*}$, $i \not= u$, and one of the one-dimensional subspaces corresponding to one of the points in $Y \cap \PP G_{\lambda_{k-u+1}^*}$ is a linear space $W \in Z \cap \Sigma_{\lambda^*}$. By Kleiman's Transversality Theorem \cite{kleiman:transverse}  in characteristic zero  or a simple tangent space calculation in general, we conclude that $Z \cap \Sigma_{\lambda^*}$ consists of $m$ reduced points.  We thus obtain the following proposition.

\begin{proposition}\label{prop-Grassmannian}
Let $\sigma$ be an obstructed Schubert class in the cohomology of $G(k,n)$ and let $m$ be a positive integer. Then there exists an irreducible subvariety of $G(k,n)$ that represents $m \sigma$. 
\end{proposition}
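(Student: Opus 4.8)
The plan is to reduce an arbitrary obstructed class $\sigma_\lambda$ to the situation handled by the construction just described, at worst after passing through the isomorphism $G(k,n)\cong G(n-k,n)$ that sends a subspace to its annihilator (and a Schubert class to the one whose associated partition is the transpose).

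First I would unwind the classification of \S\ref{S:Gr}: the class $\sigma_\lambda$ is obstructed exactly when one of the following holds: (i)~$i_j=1$ for some $1<j<t$; (ii)~$\mu_{j-1}=\mu_j-1$ for some $1<j\le t$ (since $\mu$ is strictly increasing, the failure of $\mu_{j-1}\le\mu_j-2$ is precisely this equality); (iii)~$i_1=1$ and $\mu_1>0$; or (iv)~$i_t=1$ and $\mu_t<n-k$. If $G(k,n)$ is a projective space ($k=1$ or $k=n-1$), then every obstructed class other than the point class and the fundamental class is the class of a non-maximal linear space, and Corollary~\ref{cor-linear} applies; so I would assume $2\le k\le n-2$, which forces $i_1=k\ge 2$ whenever $t=1$.

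If (i), (iii), or (iv) holds, I would exhibit an index $j$ with $i_j=1$ and $0<\mu_j<n-k$, so that the construction above applies verbatim and produces an irreducible $Z\subset G(k,n)$ with $[Z]=m\sigma_\lambda$. In case (i), since $\mu_1<\cdots<\mu_t$ are non-negative integers, $\mu_j\ge\mu_1+(j-1)\ge 1$ and $\mu_j\le\mu_t-(t-j)\le(n-k)-1$. In case (iii), $i_1=1$ and $k\ge 2$ force $t\ge 2$, so $0<\mu_1<\mu_2\le n-k$ and we take $j=1$. In case (iv), again $t\ge 2$, so $\mu_t\ge\mu_1+(t-1)\ge 1$ while $\mu_t<n-k$ by hypothesis, and we take $j=t$.

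The remaining case is (ii), and here I would use the transpose. The partition of $\sigma_\lambda$ is $p=(\mu_t^{i_t},\ldots,\mu_1^{i_1})$, and a direct computation of the transposed partition $p^{T}$ shows that its associated sequence is, read in increasing order of value, $0^{(n-k)-\mu_t}$ (present only if $\mu_t<n-k$), then $(i_t+\cdots+i_r)^{\,\mu_r-\mu_{r-1}}$ for $r=t,t-1,\ldots,2$, and finally $k^{\mu_1}$ (present only if $\mu_1>0$). In particular the value $v:=i_t+\cdots+i_j$ occurs with multiplicity $\mu_j-\mu_{j-1}=1$, and $0<i_t\le v\le i_2+\cdots+i_t<k$. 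Hence the Schubert class in $G(n-k,n)$ with partition $p^{T}$ satisfies the hypothesis of the construction above (with $n-k$ there replaced by $k$); so $m$ times that class is represented by an irreducible subvariety of $G(n-k,n)$, and its image under $G(n-k,n)\cong G(k,n)$ is an irreducible subvariety of class $m\sigma_\lambda$. The only genuinely delicate point in the argument is the bookkeeping here: verifying that $p^{T}$ has the stated associated sequence and that $0<v<k$, with due care for the boundary blocks $\mu_1=0$ and $\mu_t=n-k$. Everything else is an immediate appeal to the construction already carried out, or to Corollary~\ref{cor-linear}.
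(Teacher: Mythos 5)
Your proposal is correct and follows essentially the same route as the paper: the cases with an isolated block value strictly between $0$ and $n-k$ are handled by the explicit cone-over-a-plane-curve construction, and the remaining case $\mu_j-\mu_{j-1}=1$ is reduced to the first via the annihilator isomorphism $G(k,n)\cong G(n-k,n)$. The only difference is that you carry out explicitly the transpose bookkeeping (and the check that cases (i), (iii), (iv) really yield $0<\mu_j<n-k$) that the paper disposes of with a citation to Bryant; that verification is accurate.
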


\begin{proof}
By Section \ref{S:Gr}, the obstructed Schubert classes are those classes $\sigma_\lambda$ where the associated sequence $(\mu_1^{i_1}, \dots, \mu_t^{i_t})$ has an index $j$ such that 
\begin{enumerate}
\item either $i_j =1$ and $n-k > \mu_j > 0$,
\item or $\mu_{j+1} - \mu_j =1$.
\end{enumerate}  
In the first case, our construction gives an irreducible subvariety of $G(k,n)$ representing $m \sigma$. In the second case, one can either give an analogous explicit construction or reduce to the first case as follows. There is an isomorphism $\phi:  G(k,n) \rightarrow G(n-k, n)$ that maps a subspace $W \in G(k,n) = G(k,V)$ to $\mathrm{Ann}(W) \in G(n-k,n)= G(n-k,V^*)$ to the linear forms vanishing on $W$.  The isomorphism takes a Schubert class $\sigma$ satisfying condition (2) to a Schubert class $\mu$ satisfying condition (1), cf.  \cite{bryant:rigidity}. Let $Z$ be an irreducible subvariety of $G(n-k,n)$ representing $m \mu$, then $\phi^{-1}(Z)$ is an irreducible subvariety of $G(k,n)$ that represents $m \sigma$. This concludes the proof of Theorem \ref{main-theorem} in the case of Grassmannians.
\end{proof}

\begin{remark}
Given an obstructed class $\sigma$, Proposition \ref{prop-Grassmannian} constructs an irreducible representative $Z$ of $m \sigma$ in a Schubert variety of dimension one larger. More concretely, the variety constructed is contained in the Schubert variety $\Sigma_{\lambda'}$ where $\lambda_i' = \lambda_i$ if $i \not= u$ and $\lambda_u' = \lambda_u + 1$.  Furthermore, $Y$ can be taken to pass through a general point of $\Sigma_{\lambda'}$.
\end{remark}

We can apply the same construction to $LG(n,2n)$ and $OG(n,2n)$ to obtain the following corollary.

\begin{corollary}\label{cor-isotropic}
Let $\sigma$ be an obstructed class in the cohomology of $LG(n,2n)$ or $OG(n,2n)$. Then $m \sigma$ can be represented by an irreducible subvariety for every $m \geq 1$.
\end{corollary}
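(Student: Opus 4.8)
The plan is to mimic the Grassmannian construction from Proposition~\ref{prop-Grassmannian}, transporting it verbatim to the isotropic setting and then handling the few genuinely new features that arise from the quadratic or symplectic form. Recall from Sections~\ref{S:LG} and \ref{S:Spin} that the obstructed classes in $LG(n,2n)$ and $OG(n,2n)$ are those $\sigma_\lambda$ whose associated sequence $(\mu_1^{i_1},\dots,\mu_t^{i_t})$ violates one of the listed conditions, and by Remark~\ref{R:LGS} the indexing sequences (and the obstructed ones) for $LG(n,2n)$ and $OG(n+1,2n+2)$ agree, so it suffices to treat $LG(n,2n)$ and then the spinor case follows by the same bookkeeping. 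First I would isolate, exactly as in the Grassmannian case, an index $j$ with $i_j = 1$ and (in the Lagrangian case) $0 < \mu_j$, equivalently $\lambda_{u-1}+1 < \lambda_u$ with $u = i_1+\cdots+i_j$; the ``interior'' obstruction $\mu_{j+1}-\mu_j = 1$ and the boundary obstructions ($i_1 = 1$ with $\mu_1 > 0$, or $\lambda_s > n-2$) must be reduced to this one.

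The core construction is then: replace the defining partial flag $F_{\lambda_1}\subset\cdots\subset F_{\lambda_k}$ by one whose $u$-th term is bumped up by one dimension, fix a smooth plane curve $C$ of degree $m$ inside $\PP F_{\lambda_u+1}$ whose span misses $\PP F_{\lambda_u-2}$, form the cone $Y$ over $C$ with vertex $\PP F_{\lambda_u-2}$, and let $Z$ be the closure of the locus of Lagrangian $W$ with $\PP W\cap Y$ a $(u-1)$-plane and $\dim(W\cap F_{\lambda_i}) = i$ for $i\neq u$. The irreducibility argument is identical to the Grassmannian one: $Z$ fibers over the analogous locus $U$ of $u$-planes inside $Y$ meeting the lower flag correctly, which itself fibers over (an open subset of) a Schubert variety in a smaller \emph{Lagrangian} Grassmannian with equidimensional irreducible fibers, so the theorem on the dimension of fibers applies. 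The class computation—intersect with complementary Schubert varieties, show all but the Poincar\'e dual intersection vanish, and count $Y\cap\PP G_{\lambda_{k-u+1}^*} = m$ reduced points using Kleiman transversality—goes through unchanged.

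The one structural subtlety, and the point I expect to be the main obstacle, is ensuring that the auxiliary data (the bumped flag, the curve $C$, the cone $Y$) can be chosen compatibly with the symplectic (resp.\ symmetric) form, so that generic Lagrangian (resp.\ maximal isotropic) subspaces satisfying the incidence conditions actually exist and sweep out the right class. In the Lagrangian case, a partial isotropic flag $F_1\subset\cdots\subset F_n$ can be extended by its symplectic perp $F_n\subset F_{n-1}^\perp\subset\cdots$, and the bumped term $F_{\lambda_u+1}$ is still isotropic provided $\lambda_u+1 \le n$; the condition $\mu_j > 0$ in the reduction step is precisely what guarantees $\lambda_u < n$, hence room to bump while staying isotropic. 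The curve $C$ and cone $Y$ live inside $\PP F_{\lambda_u+1}$, an isotropic linear space, so every subspace of $Y$ is automatically isotropic—this is the key simplification that makes the argument work. For the boundary obstruction $\lambda_s > n-2$ (i.e.\ $\lambda_s \in\{n-1,n\}$), one handles it with a variant: when $\lambda_s = n$ one instead builds $Z$ by letting $W$ vary over Lagrangians whose intersection with $F_n$ sweeps a plane curve of degree $m$ in a suitable $\PP^2$, using that the family of Lagrangians containing a fixed codimension-two isotropic subspace of $F_n$ is itself (essentially) an $LG(1,2) = \PP^1$-bundle; alternatively, and more cleanly, one can invoke the analogue of the duality isomorphism used in Proposition~\ref{prop-Grassmannian} to trade a boundary obstruction for an interior one.

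Finally, for $OG(n,2n)$ the same construction applies with ``Lagrangian'' replaced by ``maximal isotropic in a fixed component,'' the only extra care being the two-component structure: bumping a flag element and taking cones over curves must be done so that the resulting isotropic $n$-planes all lie in the prescribed connected component, which is automatic once $Y$ is contained in a single isotropic linear space and the lower flag is fixed, since the component is locally constant. The boundary obstruction $\lambda_s > n-3$ is reduced exactly as in the Lagrangian/spinor dictionary of Remark~\ref{R:LGS}. I would write the Lagrangian case in detail, note that the spinor case is word-for-word the same modulo the shift in Remark~\ref{R:LGS}, and flag the boundary reductions as the place requiring the most care.
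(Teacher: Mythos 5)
Your main construction for the ``interior'' obstructions (an index $j$ with $i_j=1$ and room to bump inside an isotropic flag element) is a legitimate alternative to the paper's route: the paper does not redo the cone-over-a-curve construction inside $LG(n,2n)$, but instead fixes a maximal isotropic subspace $F$, observes that $\sigma_\lambda$ is then an obstructed class in the type A Grassmannian $G(s,F)$, imports the irreducible representative $Z$ of $m\sigma_\lambda$ from Proposition \ref{prop-Grassmannian}, and takes the (irreducible, by the theorem on the dimension of fibers) locus of maximal isotropic subspaces meeting $F$ in a subspace parameterized by $Z$. Your direct version should work in those cases, at the cost of redoing the intersection-number computation in $LG(n,2n)$ rather than citing the type A result.

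The genuine gap is in the boundary cases, which are exactly the cases that cannot be confined to a single isotropic subspace: (i) $\lambda_s=n$ with $i_t=1$, and (ii) $\lambda_s=n-1$ in $LG(n,2n)$. For (ii), note that $\lambda_s=n-1$ makes $\sigma_\lambda$ obstructed even when the associated sequence satisfies every type A rigidity condition (e.g.\ $\lambda=(n-2,n-1)$, with $\mu=((n-3)^2)$), so there may be no index $j$ with $i_j=1$ at all and your core construction has nothing to bite on; your sketch for (i) does not address this case. Worse, the ``cleaner'' fallback you propose --- an analogue of the duality $G(k,n)\cong G(n-k,n)$ trading a boundary obstruction for an interior one --- does not exist here: the symplectic (resp.\ symmetric) form identifies $V$ with $V^*$ in such a way that $\mathrm{Ann}(W)$ corresponds to $W^{\perp}=W$ for $W$ Lagrangian (resp.\ maximal isotropic), so the induced map on $LG(n,2n)$ (resp.\ on each component of the orthogonal Grassmannian) is the identity and trades nothing. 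The paper handles (i) and (ii) with a construction that is essentially symplectic and deliberately leaves the isotropic world: take the cone $H$ with vertex $\PP F_{n-2}$ over a degree $m$ plane curve inside $F_{n-2}^{\perp}$ (which is not isotropic), and require $\PP W\cap H=\PP U$ with $U$ isotropic of dimension $s$; see \cite{coskun:symplectic}. For the spinor variety the boundary case is instead absorbed by appending $n+1$ to $\lambda$ and reducing to $G(s+1,F)$ for a maximal isotropic $F$ in the appropriate component --- a parity trick special to $OG$ that Remark \ref{R:LGS} alone does not supply. Finally, a small slip: it is not $\mu_j>0$ that guarantees room to bump at the top (that inequality gives $\lambda_u>u$); the needed room comes from the upper bound $\mu_j<n-s$, i.e.\ precisely from excluding case (i).
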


\begin{proof}
Let $X$ be either $LG(n,2n)$ or $OG(n+1,2n+2)$.  Let $\lambda_1 < \cdots < \lambda_s \leq n$ be a sequence indexing an obstructed Schubert class $\sigma$.  (See Sections \ref{S:LG} and \ref{S:Spin} and Remark \ref{R:LGS}.)  Let $(\mu_1^{i_1}, \dots, \mu_t^{i_t})$ be the associated sequence. First, suppose there exists an index $1 < j \leq t $ such that $\mu_j = \mu_{j-1} +1$ or an index $1 \leq j < t$ such that $i_j =1$ and $\mu_1 > 0$ if $j=1$. If $X=OG(n+1, 2n+2)$, we also allow the case $i_t =1$. If $X=LG(n,2n)$, let $F \in X$ be a maximal isotropic linear space. 
If $X=OG(n+1, 2n+2)$, and $s \equiv n+1$ (mod $2$), then let $F\in OG(n+1,2n+2)$ be a maximal isotropic space; otherwise, if $s \not\equiv n+1$ (mod $2$), let $F$ be a maximal isotropic space of $\mathbb{C}^{2n+2}$ corresponding to a point in the second component of the Isotropic Grassmannian (cf. Section \ref{S:Spin}).  Then $\sigma_\lambda$ is an obstructed class in $G(s,F)$; see Section \ref{S:Gr}.
By Proposition \ref{prop-Grassmannian}, there exists an irreducible subvariety $Z$ of $G(s, F)$ representing $m \sigma_{\lambda_1, \dots, \lambda_s}$. Let $Y$ be the Zariski closure of the locus in $X$ parameterizing maximal isotropic spaces that intersect $F$ in an $s$-dimensional subspace parameterized by $Z$. By the theorem on the dimension of fibers, $Y$ is irreducible. By pairing with Schubert classes in complementary dimension, we see that $[Y] = m \sigma$. 

Next, suppose that $X= OG(n+1, 2n+2)$ and $\lambda_s = n-1$.  If $s \not\equiv n+1$ (mod $2$), let $F \in OG(n+1,2n+2)$ be a maximal isotropic subspace contained in $OG(n+1, 2n+2)$; if $s \equiv n+1 \ (\mbox{mod} \ 2)$, let $F$ be a maximal isotropic subspace in the second component of the Isotropic Grassmannian. 
Then the Schubert class $\sigma_{\lambda'}$ in $G(s+1, F)$, where  $\lambda'$ is the sequence obtained by adding $n+1$ to $\lambda$, is an obstructed class. By Proposition \ref{prop-Grassmannian}, there exists a subvariety $Z$ in $G(s+1, F)$ with class $m \sigma_{\lambda'}$. As in the previous paragraph, define $Y$ to be the Zariski closure of the locus in $X$ parameterizing maximal isotropic spaces that intersect $F$ in an $s+1$-dimensional subspace parameterized by $Z$. Then $Y$ is an irreducible subvariety that represents $m\sigma_{\lambda}$ in $OG(n+1, 2n+2)$.

It remains to address the following two cases when $X=LG(n,2n)$: 
\begin{enumerate}
\item  $\mu_t= n -s$ and $i_t =1$,
\item $\lambda_s = n-1$ (equivalently, $\mu_t = n-1-s$).
\end{enumerate} 
In both cases, let $H$ be the cone with vertex $\PP F_{n-2}$ over a general irreducible plane curve of degree $m$ in $F_{n-2}^{\perp}$.  Let  $Y$ be the Zariski closure of the locus parameterizing isotropic subspaces $W$ such that $\PP W \cap H = \PP U$, where $U$ is an $s$-dimensional isotropic subspace, and $\dim(W \cap F_{\lambda_i}) = i$ for $1 \leq i < s$. Then, arguing as in the previous cases, $Y$ is irreducible and represents the class $m \sigma$  (see \cite{coskun:symplectic}).
\end{proof}

\subsection{Construction 3: Correspondences.} Correspondences give another way of constructing irreducible subvarieties. Consider the following diagram.
\begin{center}
\setlength{\unitlength}{5pt}
\begin{picture}(27,12)
\put(2,2){$X$} 
\put(8,7){\vector(-1,-1){3}}
\put(9,9){$Z$}
\put(12,7){\vector(1,-1){3}}
\put(16,2){$Y$}
\put(4,6){$\eta$}
\put(14,6){$\tau$}
\end{picture}
\label{f:correspondence} \\
\textsc{Figure 3}.  A correspondence between $X$ and $Y$.
\end{center}
Suppose that $X,Y$ and $Z$ are irreducible, projective varieties and that the fibers of $\tau$ are irreducible of the same dimension. Then the theorem on the dimension of fibers \cite[I.6.8]{shafarevich} implies that given an irreducible subvariety $W \subset Y$, $\tau^{-1}(W)$ is an irreducible subvariety of $Z$. Since the image of an irreducible variety is irreducible, $\eta(\tau^{-1}(W))$ (with its reduced induced structure) is an irreducible subvariety of $X$. 

Let $U$ and $W$ be irreducible subvarieties of $Y$ such that $[W]= m [U]$. If $\eta$ is generically injective on both $\tau^{-1}(W)$ and $\tau^{-1}(U)$, then $[\eta(\tau^{-1}(W))] = m[\eta(\tau^{-1}(U))]$.  This may be seen as follows. Since $[W]= m [U]$, by flat pull-back \cite[1.7.1]{fulton} $$[\tau^{-1}(W)]= \tau^*([W])= \tau^*(m[U]) = m \tau^*([U])= m [\tau^{-1}(U)].$$ Since $\eta$ is generically injective on $\tau^{-1}(W)$ and $\tau^{-1}(U)$, then by proper push-forward \cite[1.4]{fulton} $$[\eta(\tau^{-1}(W))]= \eta_* \tau^*([W]) = m \eta_* \tau^*([U]) = m [\eta(\tau^{-1}(U))].$$ We thus obtain the following lemma.

\begin{lemma}\label{lemma-correspondence}
Consider the correspondence in Figure 3. Assume that $X,Y,Z$ are irreducible, projective varieties and $\tau$ is a flat morphism with irreducible fibers. Let $W$ and $U$ be irreducible subvarieties of $Y$ such that $[W] = m [U]$. If $\eta$ is generically injective on both $\tau^{-1}(W)$ and $\tau^{-1}(U)$, then $\eta(\tau^{-1}(W))$ is an irreducible subvariety that represents $m [\eta(\tau^{-1}(U))]$.
\end{lemma}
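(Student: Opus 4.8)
The plan is to verify the three displayed chains of equalities are legitimate by checking the hypotheses of the cited functoriality results, and then to assemble them. First I would record the set-up: since $\tau$ is flat, flat pullback $\tau^*$ on cycles is defined, and since $Y$ and $Z$ are irreducible projective and $\tau$ has irreducible fibers all of the same dimension (which follows from flatness plus irreducibility of $Y$ and $Z$, so the fiber dimension is the constant $\dim Z - \dim Y$), the theorem on the dimension of fibers \cite[I.6.8]{shafarevich} gives that $\tau^{-1}(W)$ and $\tau^{-1}(U)$ are irreducible. Their images under $\eta$ are then irreducible as continuous images of irreducible sets, so $\eta(\tau^{-1}(W))$ and $\eta(\tau^{-1}(U))$ are irreducible subvarieties of $X$; give them their reduced induced structure.

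Next I would treat the cycle-class computation. From $[W] = m[U]$ in $H^*(Y)$ (equivalently in the appropriate Chow group), flat pullback \cite[1.7.1]{fulton} is linear, so $[\tau^{-1}(W)] = \tau^*[W] = m\,\tau^*[U] = m\,[\tau^{-1}(U)]$; here one uses that $\tau^*[W]$ really is the fundamental class of the scheme-theoretic preimage $\tau^{-1}(W)$, which holds because $\tau$ is flat (so the preimage has no embedded components and the multiplicity is one along its unique component, $\tau^{-1}(W)$ being irreducible). Then push forward by the proper morphism $\eta$ (restricted to $\tau^{-1}(W)$, resp. $\tau^{-1}(U)$): proper pushforward \cite[1.4]{fulton} sends $[V]$ to $(\deg)\,[\eta(V)]$ where the degree is $[k(V):k(\eta(V))]$ if this extension is finite and $0$ otherwise. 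The hypothesis that $\eta$ is generically injective on $\tau^{-1}(W)$ and on $\tau^{-1}(U)$ forces this degree to be $1$ in both cases, so $\eta_*[\tau^{-1}(W)] = [\eta(\tau^{-1}(W))]$ and $\eta_*[\tau^{-1}(U)] = [\eta(\tau^{-1}(U))]$. Combining, $[\eta(\tau^{-1}(W))] = \eta_*\tau^*[W] = m\,\eta_*\tau^*[U] = m\,[\eta(\tau^{-1}(U))]$, which is exactly the assertion.

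The main point to be careful about — the step I expect to require the most attention — is the identification $\tau^*[W] = [\tau^{-1}(W)]$, i.e. that flat pullback of the fundamental cycle of $W$ is the fundamental cycle of the reduced preimage with multiplicity one. One must invoke that flatness of $\tau$ guarantees the scheme-theoretic preimage is purely of the expected dimension with no embedded primes, and since $\tau^{-1}(W)$ is irreducible there is a single component along which, again by flatness and reducedness of $W$ at its generic point, the length of the local ring is $1$. This is standard (it is built into \cite[1.7]{fulton}), but it is the place where flatness is genuinely used rather than mere surjectivity with equidimensional fibers. Everything else is formal: linearity of $\tau^*$, the projection-free push-pull, and the interpretation of the pushforward degree via generic injectivity.
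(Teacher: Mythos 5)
Your proposal is correct and follows exactly the paper's own argument: irreducibility of $\tau^{-1}(W)$ via the theorem on the dimension of fibers, the chain $[\tau^{-1}(W)]=\tau^*[W]=m\tau^*[U]=m[\tau^{-1}(U)]$ by flat pullback, and then proper pushforward with degree one forced by generic injectivity. The extra care you take in justifying $\tau^*[W]=[\tau^{-1}(W)]$ is a welcome elaboration but not a departure from the paper's route.
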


There are several special cases that are useful in constructing irreducible subvarieties of cominuscule varieties. First, take $Y=Z$, and $\tau$ equal to the identity and $\eta$ to be an embedding. Let $\sigma = [U]$ be the class of a subvariety in $Y$ and assume that an irreducible subvariety $W \subset Y$ represents $m \sigma$ in $Y$. Then $W \subset X$ represents  $m[U]$ in the cohomology of $X$. These two classes are certainly cohomologous in $X$ since they are already cohomologous in $Y$. We obtain the following corollary. 

\begin{corollary}\label{stars}
Let $\sigma$ be one of the Schubert classes $(6: 3)$, $(8: 7)$ in $\mathbb{O}\PP^2$ or $(7 : 3)$, $(9:7)$, $(10:9)$, $(11:21)$, $(13:45)$ in $G(\mathbb{O}^3, \mathbb{O}^6)$.  (These classes correspond to the nodes marked by $*$ in Figures \ref{f:E6} and \ref{f:E7}.)  Then $m \sigma$ can be represented by an irreducible subvariety.
\end{corollary}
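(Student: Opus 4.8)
The plan is to identify each of the starred classes as a class that, while not itself representable by an irreducible subvariety via the earlier constructions, \emph{becomes} a class whose multiples are representable once we pass to an ambient Schubert subvariety in which it is visibly flexible. Concretely, I would invoke the first special case of Lemma \ref{lemma-correspondence}: take $Y = Z$ a Schubert variety of $X$ (with $X = \OOO\PP^2$ or $G(\OOO^3,\OOO^6)$), $\tau$ the identity, and $\eta \colon Y \hookrightarrow X$ the inclusion. Then any irreducible subvariety $W \subset Y$ representing a multiple $m\sigma$ of a class inside $Y$ also represents $m\sigma$ inside $X$, since the cycle classes already agree in $H^*(Y,\ZZ)$ and push forward compatibly. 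So the entire task reduces to: for each starred node $\sigma$, exhibit a Schubert subvariety $Y$ of $X$ containing $\sigma$ as a class and in which every multiple $m\sigma$ has an irreducible representative.

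The natural candidates for $Y$ are the sub-cominuscule (or near-cominuscule) Schubert varieties appearing in the Hasse diagrams — quadrics $\cQ^8 \subset \cayley$, $\cQ^{10} \subset \freud$, the cones $C(OG(5,10))$, $C(\OOO\PP^2)$, and the Tits-transform uniruled varieties recorded in Examples \ref{eg:E6b} and \ref{eg:E7b} and Table \ref{t:tits}. For each starred class I would locate a Schubert variety $Y$ in which the class restricts to a class already handled by an earlier result: e.g. the class of a non-maximal linear space (Corollary \ref{cor-linear}), a Schubert divisor of a Picard-rank-one Schubert variety (Corollary \ref{cor-divisor}), or a non-extremal Schubert class on a smooth quadric (Corollary \ref{cor-quadric}). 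For instance, one expects $(6:3)$ and $(8:7)$ in $\OOO\PP^2$ — which sit just below $\cQ^8$ and the cone $C(OG(5,10))$ in the diagram — to restrict to flexible classes on those sub-Schubert varieties; and the five starred classes in $G(\OOO^3,\OOO^6)$ to be pulled back from flexible classes on the quadric $\cQ^{10}$ or on $C(\OOO\PP^2) \cong C(\cayley)$, or on the $OG$-cones appearing inside them. The proof then amounts to a finite case check: for each of the seven nodes, name the containing Schubert variety $Y$, identify which earlier corollary applies to $\sigma|_Y$, and conclude via the $\tau = \mathrm{id}$ case of Lemma \ref{lemma-correspondence}.

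The main obstacle will be the bookkeeping: correctly matching each starred node to a Schubert subvariety $Y$ in the Hasse diagram and verifying that the restriction of $\sigma$ to $Y$ is genuinely one of the flexible types — i.e., that $Y$ really does contain $\sigma$ (not a multiple of it) as a Schubert class, and that inside $Y$ the class is, say, a non-maximal linear space class or the unique divisor class of a Picard-rank-one Schubert subvariety. This requires reading off closure relations, degrees, and dimensions from Figures \ref{f:E6} and \ref{f:E7} and from Table \ref{t:tits}, and in a couple of cases probably iterating: the class may be flexible in $Y$ only because it is a divisor class there, so one applies Proposition \ref{prop-Bertini} to $Y$ (dimension $\geq 2$) rather than Corollary \ref{cor-linear}. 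I do not expect any new geometric input beyond what is already in Sections \ref{sec-preliminary} and \ref{sec-proof}; the content is entirely in locating the right ambient Schubert variety for each of the seven classes and citing the appropriate earlier corollary.
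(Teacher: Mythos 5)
Your overall shape (transfer into a suitable ambient Schubert variety, then push forward along the inclusion via the $\tau=\mathrm{id}$ case of Lemma \ref{lemma-correspondence}) is reasonable, and you correctly single out $C(OG(5,10))$ and $C(\OOO\PP^2)$ as the relevant ambient varieties. But the finite case check you propose cannot close with the tools you list. The corollaries you plan to apply inside $Y$ (Corollaries \ref{cor-linear}, \ref{cor-divisor}, \ref{cor-quadric}) apply to none of the seven classes: $(6:3)$ and $(8:7)$ have odd degrees $3$ and $7$, so they cannot lie in a quadric at all (in $\cQ^8$ every irreducible subvariety of codimension $2$ has even degree, and $(8:7)$ has the same dimension as $\cQ^8$) --- in particular your guess that $(6:3)$ sits below $\cQ^8=(8:2)$ is false; they are not linear spaces; and they are marked $*$ rather than $+$ precisely because Bertini fails for them: every Schubert variety covering $(6:3)$ or $(8:7)$ (namely $(7:5)$, respectively $(9:9)$ and $(9:12)$) has two distinct Schubert divisor classes, so its hyperplane section is not a multiple of $\sigma$. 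The same objections apply to the five classes in $G(\OOO^3,\OOO^6)$.

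The two missing ingredients are the cone structure and Corollaries \ref{cor-isotropic} and \ref{E6}. The Schubert variety $(11:12)\subset\OOO\PP^2$ is the cone $C(OG(5,10))$ over the spinor variety, and $(6:3)$, $(8:7)$ are the cones over the Schubert varieties $\Sigma_{2,3}$ (dimension $5$, degree $3$) and $\Sigma_{2}$ (dimension $7$, degree $7$) of $OG(5,10)$. These two classes are obstructed in $OG(5,10)$ (Section \ref{S:Spin}), so Corollary \ref{cor-isotropic} --- absent from your toolkit --- gives irreducible representatives of all their multiples, and taking cones with vertex at the cone point produces irreducible representatives of $m(6:3)$ and $m(8:7)$. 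Note that the transfer mechanism here is ``cone over,'' not ``restrict to a sub-Schubert variety and include'': $OG(5,10)$ itself is not a Schubert subvariety of $\OOO\PP^2$, and a $5$-fold of degree $3m$ in $\OOO\PP^2$ need not be a multiple of a single Schubert class, whereas the cone over it again is. Likewise, the five classes in $G(\OOO^3,\OOO^6)$ are the cones, inside $C(\OOO\PP^2)=(17:78)$, over $(6:3)$, $(8:7)$, $(9:9)$, $(10:21)$, $(12:45)$ in $\OOO\PP^2$; the last three are the $T$-classes, which require the Tits-transform argument of Corollary \ref{E6}, also not among the results you cite. So the proof is a cone construction layered over Corollaries \ref{cor-isotropic} and \ref{E6}, rather than a restriction to subvarieties where the Bertini-type corollaries apply.
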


\begin{proof}
The Schubert variety corresponding to the node marked by $C(OG(5,10))$ in Figure \ref{f:E6} is a cone over the spinor variety $OG(5,10)$, cf. Example \ref{eg:E6b}.  
The Schubert variety $\Sigma_{2,3} \subset OG(5,10)$ is the unique Schubert variety of dimension 5 and degree 3, and $\Sigma_{2} \subset OG(5,10)$ is the unique Schubert variety of dimension 7 and degree 7.  Both the classes $\sigma_{2,3}$ and $\sigma_{2}$ in $OG(5,10)$ are obstructed (Section \ref{S:Spin}).
By Corollary \ref{cor-isotropic}, there exists irreducible subvarieties $Y$ representing the classes $m \sigma_{2,3}$ and  $m \sigma_{2}$ in $OG(5,10)$.
Taking the cone over $Y$, we obtain irreducible subvarieties of $\mathbb{O} \PP^2$ representing $m$ times the Schubert classes $(6:3)$ and $(8:7)$, see Figure \ref{f:E6}.

The variety corresponding to the node marked by $C(\mathbb{O}\PP^2)$ in Figure \ref{f:E7} is the cone over the Cayley plane $\mathbb{O} \PP^2$, cf. Example \ref{eg:E7b}.  By the previous paragraph and Corollary \ref{E6}, multiples of the Schubert classes $(6 : 3)$, $(8:7)$, $(9:9)$, $(10:21)$, $(12:45)$ in $\mathbb{O}\PP^2$ can be represented by irreducible subvarieties of $\mathbb{O}\PP^2$. Taking the cone over these varieties, we conclude that multiples of the classes $(7 : 3)$, $(9:7)$, $(10:9)$, $(11:21)$, $(13:45)$ in $G(\mathbb{O}^3, \mathbb{O}^6)$ can be represented by irreducible subvarieties, see Figure \ref{f:E7}.
\end{proof}

Next, we take Figure 3 to be a Tits correspondence (Section \ref{S:tits}) by setting $X= G/P$, $Y= G/Q$, $Z= G/(P \cap Q)$ and $\eta$ and $\tau$ to be the natural projections.  For a subvariety $W \subset Y$, let $\mathcal{T}(W) = \eta(\tau^{-1}(W))$ denote the Tits transform.  Let $d_{\tau} = \dim(\tau^{-1}(p))$ for a point $p \in G/Q$.

\begin{lemma}\label{lemma-injectivity}
Let $\Sigma$ be a Schubert variety in $G/Q$. If $\dim(\mathcal{T}(\Sigma)) = \dim(\Sigma) + d_{\tau}$, then $\eta$ is generically injective on $\tau^{-1}(\Sigma)$.
\end{lemma}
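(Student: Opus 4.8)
The plan is to reduce to a standard Schubert variety and then prove that $\eta$ restricts to an \emph{isomorphism} over the dense Schubert cell of $\mathcal{T}(\Sigma)$; generic injectivity on $\tau^{-1}(\Sigma)$ is then immediate. By the $G$-equivariance of the Tits correspondence I may assume $\Sigma = Y_w$ is a $B$-invariant Schubert variety, with the coset representative $w$ chosen in $W^\fq_\tmax$. Then, by the discussion preceding Lemma \ref{L:tits}, $\tau^{-1}(\Sigma)$ is the standard Schubert variety $Z_w \subset G/R$ attached to $W_\fr w$, and $\mathcal{T}(\Sigma) = \eta(Z_w)$ is the standard Schubert variety $X_w \subset G/P$ attached to $W_\fp w$. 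Since $\tau$ is a $G$-homogeneous fibration with $d_\tau$-dimensional fibers, $\dim Z_w = \dim \Sigma + d_\tau$, so the hypothesis says exactly that $\dim X_w = \dim Z_w$.

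Next I would look at the dense cells. Let $v \in W^\fr_\tmin$ be the minimal-length representative of $W_\fr w$, so that the dense cell of $Z_w$ is $C = B v^{-1} R/R$. Its image $\eta(C) = B v^{-1} P/P$ is the Schubert cell of $G/P$ indexed by $W_\fp v = W_\fp w$ (here $W_\fr \subseteq W_\fp$, so $v \in W_\fr w \subseteq W_\fp w$), which is the dense cell $C'$ of $X_w = \mathcal{T}(\Sigma)$. Thus $\eta$ restricts to a surjective $B$-equivariant morphism $C \to C'$ of homogeneous spaces $B/\mathrm{Stab}_B(v^{-1}R) \to B/\mathrm{Stab}_B(v^{-1}P)$, and the hypothesis $\dim X_w = \dim Z_w$ says precisely that $\dim C = \dim C'$, i.e.\ that the two stabilizers have the same dimension. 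The crucial observation is that $\mathrm{Stab}_B(v^{-1}R) = B \cap v^{-1}Rv$ and $\mathrm{Stab}_B(v^{-1}P) = B \cap v^{-1}Pv$ are both connected: since $v$ normalizes the maximal torus $T$, each of them contains $T$ and is $T$-stable, hence is the product of $T$ with a collection of positive root subgroups of $B$. Combined with the inclusion $\mathrm{Stab}_B(v^{-1}R) \subseteq \mathrm{Stab}_B(v^{-1}P)$ and the equality of dimensions, this forces the stabilizers to coincide, so $\eta \colon C \to C'$ is an isomorphism.

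Finally I would rule out extra preimages coming from the lower-dimensional cells of $Z_w$. Any such cell has dimension strictly smaller than $\dim C = \dim C'$, and its image under $\eta$ is a single Schubert cell of $G/P$ of dimension at most that of the cell, hence of dimension strictly smaller than $\dim C'$; being a Schubert cell distinct from $C'$, it is disjoint from $C'$. Therefore $\eta^{-1}(C') \cap Z_w = C$, and $\eta$ is an isomorphism over the dense open subset $C' \subseteq \mathcal{T}(\Sigma)$. In particular $\eta$ is injective on the dense open subset $C \subseteq \tau^{-1}(\Sigma)$, which is what we wanted.

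The step I expect to be the main obstacle is the passage from generic finiteness — which is immediate from the dimension hypothesis — to generic injectivity: a priori the degree of $\eta|_{\tau^{-1}(\Sigma)}$ onto $\mathcal{T}(\Sigma)$ could be larger than one. This is exactly what the connectedness of $B \cap v^{-1}Pv$ and $B \cap v^{-1}Rv$ rules out. Everything else — the reduction to standard Schubert varieties, the identification of $\tau^{-1}(Y_w)$ and $\mathcal{T}(Y_w)$, and the dimension count for cells — is bookkeeping with the Bruhat order that is already recorded in the preliminaries.
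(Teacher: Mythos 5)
Your proof is correct, but it takes a genuinely different route from the paper's. The paper argues by intersection theory: it chooses a general Schubert variety $\Gamma \subset G/P$ Poincar\'e dual to $\mathcal{T}(\Sigma)$, so that $\Gamma \cap \mathcal{T}(\Sigma)$ is a single point $p$, and then examines the intersection number $[\eta^{-1}(\Gamma)]\cdot[\tau^{-1}(\Sigma)]$ in $G/(P\cap Q)$, where the two classes have complementary dimension by the hypothesis; if the number is $1$ there is a unique point of $\tau^{-1}(\Sigma)$ over $p$ and generic injectivity follows, while if it is $0$ the nonempty intersection over $p$ cannot be dimensionally proper, forcing the general fiber of $\eta$ over $\mathcal{T}(\Sigma)$ to be positive-dimensional and contradicting the dimension hypothesis. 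You instead work directly with the Bruhat decomposition: you identify the dense cells of $\tau^{-1}(\Sigma)=Z_w$ and $\mathcal{T}(\Sigma)=X_w$ as $B$-orbits, note that the hypothesis forces the nested stabilizers $B\cap v^{-1}Rv \subseteq B\cap v^{-1}Pv$ to have equal dimension, and use connectedness to conclude they coincide. Your argument is sharper in its conclusion --- it shows $\eta$ is an isomorphism over the open cell of $\mathcal{T}(\Sigma)$, not merely generically injective, and it pinpoints the drop in stabilizer dimension as the only possible failure mode --- at the cost of more Weyl-group and group-scheme bookkeeping; the paper's argument is shorter and uses only formal Schubert calculus and the properness of intersections. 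Both rest on the same preliminary fact (Lemma \ref{L:tits} and the discussion preceding it) that $\tau^{-1}(Y_w)$ and $\mathcal{T}(Y_w)$ are Schubert varieties. The one step you should cite precisely rather than sketch is the connectedness of $B\cap v^{-1}Pv$ and $B\cap v^{-1}Rv$: the clean statement is that the intersection of two parabolic subgroups containing a common maximal torus is connected (Borel--Tits), or equivalently that a closed $T$-stable subgroup of the unipotent radical of $B$ is directly spanned by the root subgroups it contains; your phrase ``product of $T$ with a collection of positive root subgroups'' is exactly this fact, but as written it presupposes the connectedness it is meant to establish.
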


\begin{proof}
As noted in Section \ref{S:tits}, the Tits transform $\mathcal{T}(\Sigma)$ is a Schubert variety in $G/P$.  
Let $\Gamma$ be a general Schubert variety Poincar\'{e} dual to $\mathcal{T}(\Sigma)$. Since $[\Gamma] \cdot [\mathcal{T}(\Sigma)]=1$, $\Gamma$ and $\mathcal{T}(\Sigma)$ intersect at a unique point $p$. 
Note that $\dim(\tau^{-1}(\Sigma)) = \dim(\Sigma) + d_\tau$.  Therefore, $\dim(\mathcal{T}(\Sigma)) = \dim(\tau^{-1}(\Sigma))$.  It follows 
that the Schubert variety $\eta^{-1}(\Gamma)$  is  of complementary dimension to $\tau^{-1}(\Sigma)$ in $G/(P\cap Q)$. If the two are Poincar\'{e} dual in $G/(P\cap Q)$, then $[\eta^{-1}(\Gamma)]\cdot [\tau^{-1}(\Sigma)]=1$ and there is a unique point of $\tau^{-1}(\Sigma)$ lying over $p$. Therefore, $\eta$ is generically injective on $\tau^{-1}(\Sigma)$. Otherwise, $[\eta^{-1}(\Gamma)]\cdot [\tau^{-1}(\Sigma)]=0$. Since the two varieties intersect, they cannot intersect dimensionally properly. We conclude that $\tau^{-1}(\Sigma)$ has a positive dimensional fiber over $p$. Since $p$ is general, we conclude that the fibers of $\eta$ over $\mathcal{T}(\Sigma)$ are positive dimensional. Hence, by the theorem on the dimension of fibers, $\dim(\mathcal{T}(\Sigma)) < \dim(\Sigma) + d_{\tau} = \dim(\tau^{-1}(\Sigma))$. This contradicts our assumption. We conclude that $\eta$ is generically injective on $\tau^{-1}(\Sigma)$.
\end{proof}

\begin{corollary}\label{cor-prop25}
Suppose that $\Sigma \subset \Sigma'$ is a Schubert divisor. Suppose that there is an irreducible subvariety $W$ representing $m [\Sigma]$ contained in $\Sigma'$ and passing through a general point of $\Sigma'$. If $\dim(\mathcal{T}(\Sigma')) = \dim(\Sigma') + d_{\tau}$, then $\mathcal{T}(W)$ represents $m[\mathcal{T}(\Sigma)]$. 
\end{corollary}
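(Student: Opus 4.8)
The plan is to deduce the corollary from Lemmas \ref{lemma-correspondence} and \ref{lemma-injectivity}, applied to the Tits correspondence with $X=G/P$, $Y=G/Q$, $Z=G/(P\cap Q)$ and $\eta,\tau$ the natural projections. First I would record that this correspondence satisfies the hypotheses of Lemma \ref{lemma-correspondence}: $X,Y,Z$ are irreducible projective varieties, and $\tau\colon G/(P\cap Q)\to G/Q$ is a $G$-equivariant fibration with fibre $Q/(P\cap Q)$, hence flat with irreducible (indeed mutually isomorphic) fibres of dimension $d_\tau$. Taking $U=\Sigma$ and $W=W$, so that $[W]=m[U]$ by hypothesis, it then suffices to show that $\eta$ is generically injective on each of $\tau^{-1}(W)$ and $\tau^{-1}(\Sigma)$: granting this, Lemma \ref{lemma-correspondence} immediately gives that $\mathcal{T}(W)=\eta(\tau^{-1}(W))$ is irreducible and represents $m[\eta(\tau^{-1}(\Sigma))]=m[\mathcal{T}(\Sigma)]$.

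For $\tau^{-1}(W)$: since $\dim\mathcal{T}(\Sigma')=\dim\Sigma'+d_\tau$, Lemma \ref{lemma-injectivity} shows that $\eta$ is generically injective on $\tau^{-1}(\Sigma')$, so there is a proper closed subset $E\subsetneq\tau^{-1}(\Sigma')$ such that $\eta$ restricts to an isomorphism of $\tau^{-1}(\Sigma')\setminus E$ onto a dense open subset of $\mathcal{T}(\Sigma')$. Since $\dim E<\dim\tau^{-1}(\Sigma')=\dim\Sigma'+d_\tau$ while every fibre of $\tau$ has dimension $d_\tau$, the locus $\{x\in\Sigma'\colon\tau^{-1}(x)\subseteq E\}$ is a proper closed subset of $\Sigma'$, so $\tau\bigl(\tau^{-1}(\Sigma')\setminus E\bigr)$ contains a dense open subset of $\Sigma'$. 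Because $W\subseteq\Sigma'$ passes through a general point of $\Sigma'$, it meets this dense open set, so $\tau^{-1}(W)$ meets $\tau^{-1}(\Sigma')\setminus E$; as $\tau^{-1}(W)$ is irreducible (being the preimage of the irreducible $W$ under a flat map with irreducible fibres) and $E$ is closed, $\tau^{-1}(W)\setminus E$ is dense in $\tau^{-1}(W)$, and $\eta$ is injective there. Hence $\eta$ is generically injective on $\tau^{-1}(W)$.

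For $\tau^{-1}(\Sigma)$ I would again invoke Lemma \ref{lemma-injectivity}, which this time requires the equality $\dim\mathcal{T}(\Sigma)=\dim\Sigma+d_\tau$, equivalently that $\eta$ is generically finite on $\tau^{-1}(\Sigma)$ (one always has $\dim\mathcal{T}(\Sigma)\le\dim\tau^{-1}(\Sigma)=\dim\Sigma+d_\tau$). This is the step I expect to be the main obstacle, since $\tau^{-1}(\Sigma)$ is only a divisor in $\tau^{-1}(\Sigma')$ and could a priori lie inside the exceptional locus $E$ above. I would rule out a dimension drop by a cycle-theoretic argument: by flat pull-back $[\tau^{-1}(W)]=\tau^*[W]=m\,\tau^*[\Sigma]=m\,[\tau^{-1}(\Sigma)]$, hence $\eta_*[\tau^{-1}(W)]=m\,\eta_*[\tau^{-1}(\Sigma)]$. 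If $\eta$ were not generically finite on $\tau^{-1}(\Sigma)$ the right-hand side would be $0$, whereas by the previous paragraph $\eta_*[\tau^{-1}(W)]=[\mathcal{T}(W)]$ is the class of a nonempty irreducible subvariety of $G/P$, hence nonzero in $H^*(G/P,\ZZ)$ since the cohomology of $G/P$ is freely generated by the Schubert classes and effective cycles are nonnegative combinations of them. Thus $\eta$ is generically finite on $\tau^{-1}(\Sigma)$, so $\dim\mathcal{T}(\Sigma)=\dim\Sigma+d_\tau$, and Lemma \ref{lemma-injectivity} yields that $\eta$ is generically injective on $\tau^{-1}(\Sigma)$. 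Feeding both generic injectivity statements into Lemma \ref{lemma-correspondence} completes the proof.
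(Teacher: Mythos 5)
Your proof is correct and follows the same overall strategy as the paper (combine Lemma \ref{lemma-injectivity} applied to $\Sigma'$ with Lemma \ref{lemma-correspondence}), but you supply a genuinely new argument at the one point where the paper's own proof is too quick. The paper simply asserts that ``both $W$ and $\Sigma$ pass through a general point of $\Sigma'$,'' which is literally false for the divisor $\Sigma \subset \Sigma'$, so generic injectivity of $\eta$ on $\tau^{-1}(\Sigma)$ does not follow immediately from generic injectivity on $\tau^{-1}(\Sigma')$: a priori $\tau^{-1}(\Sigma)$ could sit inside the non-injectivity locus. Your cycle-theoretic detour closes this gap cleanly: having established generic injectivity on $\tau^{-1}(W)$ (your handling of the general-point hypothesis there is the careful version of what the paper intends), you use $\eta_*[\tau^{-1}(W)] = m\,\eta_*[\tau^{-1}(\Sigma)]$ together with the nonvanishing of effective classes on $G/P$ to force $\eta$ to be generically finite on $\tau^{-1}(\Sigma)$, i.e. $\dim\mathcal{T}(\Sigma) = \dim\Sigma + d_\tau$, and then Lemma \ref{lemma-injectivity} applies directly to the Schubert variety $\Sigma$ to upgrade finiteness to injectivity. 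This buys a self-contained deduction of the $\Sigma$-case from the stated hypothesis on $\Sigma'$ alone, whereas the paper implicitly relies on the dimension count for $\mathcal{T}(\Sigma)$ being verified in each application. The argument is complete; the only cosmetic caveat is that in characteristic zero ``generically injective'' already gives you all you use, so the word ``isomorphism'' in your description of $\eta$ on $\tau^{-1}(\Sigma')\setminus E$ is stronger than needed.
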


\begin{proof}
By Lemma \ref{lemma-injectivity}, $\eta$ is generically injective on $\tau^{-1}(\Sigma')$. Since both $W$ and $\Sigma$ pass through a general point of $\Sigma'$, $\eta$ is also injective on $\tau^{-1}(W)$ and $\tau^{-1}(\Sigma)$. Lemma \ref{lemma-correspondence} implies the corollary.
\end{proof}

\begin{corollary}\label{E6}
Let $\sigma$ be one of the Schubert classes $(9:9), (10:21)$ or $(12:45)$ marked by $T$ in the Hasse diagram of $\OOO \PP^2$. Then every positive multiple of $\sigma$ can be represented by an irreducible subvariety.
\end{corollary}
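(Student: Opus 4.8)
The plan is to apply Corollary \ref{cor-prop25} three times, once for each of the classes $(9:9)$, $(10:21)$, and $(12:45)$, using the Tits correspondence associated to the uniruling of $\OOO\PP^2$ by quadrics described in Example \ref{eg:E6a}.b and Example \ref{eg:E6b}.b. Recall from Example \ref{eg:E6b}.b that $(9:9) = \cT(\bP^1)$ and $(10:21) = \cT(\bP^2)$, where $\bP^1, \bP^2$ are Schubert varieties in the dual Cayley plane $E_6/P_1$; here the fibers of $\tau$ are the $8$-dimensional quadrics $\cQ^8$, so $d_\tau = 8$. First I would verify the dimension hypothesis of Corollary \ref{cor-prop25}: for each relevant $W' \subset G/Q = E_6/P_1$ one checks $\dim \cT(W') = \dim W' + 8$. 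For $\bP^1$ this reads $9 = 1 + 8$, for $\bP^2$ it reads $10 = 2 + 8$, and for the $4$-dimensional Schubert variety $W'$ with $\cT(W') = (12:45)$ it reads $12 = 4 + 8$; these are immediate from the degree/dimension bookkeeping encoded in Figure \ref{f:E6} and Table \ref{t:tits} (whose Freudenthal entries $10:2 \mapsto \cdots$ mirror the same $\OOO\PP^2$ Tits data).

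Next I would identify, for each of the three classes, a Schubert divisor $\Sigma \subset \Sigma'$ in the dual Cayley plane $E_6/P_1$ together with an irreducible representative of a multiple of $[\Sigma]$ sitting inside $\Sigma'$ and passing through its general point. Concretely: the Schubert divisor of $\bP^2$ is $\bP^1$, and $m[\bP^1]$ is represented inside $\bP^2$ by a degree-$m$ hypersurface, which is irreducible by Bertini (Corollary \ref{cor-linear}) and passes through a general point of $\bP^2$; this yields $m$ times $\cT(\bP^1) = (9:9)$ via Corollary \ref{cor-prop25}, provided $\dim\cT(\bP^2) = \dim\bP^2 + 8$, which we checked. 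For $(10:21)$ we take $\bP^2$ as a Schubert divisor in the next Schubert variety $\Sigma'$ up the chain in $E_6/P_1$ with $\dim\cT(\Sigma') = \dim\Sigma' + 8$, represent $m[\bP^2]$ inside $\Sigma'$ by an irreducible subvariety through a general point — again using Bertini, Corollary \ref{cor-linear}, or the remark following Proposition \ref{prop-Bertini} — and push forward. For $(12:45)$, one similarly exhibits the relevant $4$-dimensional Schubert variety as a Schubert divisor in a $5$-dimensional Schubert variety $\Sigma'$ of $E_6/P_1$ satisfying the dimension condition, and represents the multiple inside $\Sigma'$ using the constructions of \S\ref{sec-proof} (Bertini or the Grassmannian/moduli construction, whichever applies to that specific class in $E_6/P_1$).

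The main obstacle is the bookkeeping in the dual Cayley plane: one must check that each $\Sigma'$ chosen in $E_6/P_1$ genuinely has a single Schubert divisor of the required class (so that Proposition \ref{prop-Bertini} and its remark produce a representative of a multiple of exactly that class through a general point) \emph{and} satisfies $\dim\cT(\Sigma') = \dim\Sigma' + d_\tau$ so that Lemma \ref{lemma-injectivity} applies. Since $E_6/P_1 \cong \OOO\PP^2$ has the same Hasse diagram as Figure \ref{f:E6}, these are finite checks against that figure; the dimension condition $\dim\cT(\Sigma') = \dim\Sigma' + 8$ is exactly the statement that the Tits transform does not drop dimension, and for the Schubert varieties appearing as $\Sigma'$ here it can be read off from the uniruling-by-quadrics structure (the general quadric fiber over $\Sigma'$ meets it in a proper subvariety). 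Once these are confirmed, Corollary \ref{cor-prop25} delivers an irreducible representative of $m\sigma$ for each of $(9:9)$, $(10:21)$, $(12:45)$ and every $m \geq 1$, completing the proof.
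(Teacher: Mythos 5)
Your treatment of $(9:9)$ and $(10:21)$ is exactly the paper's argument: $\cT(\PP^1)=(9:9)$ and $\cT(\PP^2)=(10:21)$ under the quadric correspondence with $d_\tau=8$, with $m[\PP^1]$ and $m[\PP^2]$ represented by degree-$m$ hypersurfaces in $\PP^2\subset E_6/P_1$ and $\PP^3\subset E_6/P_1$ respectively, then pushed through Corollary \ref{cor-prop25}. That part is correct.

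The case $(12:45)$ is where your proposal breaks down. The $4$-dimensional Schubert variety of $E_6/P_1$ whose Tits transform is $(12:45)$ is the \emph{maximal} $\PP^4$ — the encircled (multi rigid) node in Figure \ref{f:E6}. Consequently no irreducible subvariety of $E_6/P_1$ represents $m[\PP^4]$ for any $m\geq 2$, so the hypothesis of Corollary \ref{cor-prop25} that there be an irreducible $W\subset\Sigma'$ with $[W]=m[\Sigma]$ can never be satisfied; "Bertini or the Grassmannian/moduli construction" cannot produce such a $W$ because the class itself is multi rigid. Concretely, this $\PP^4$ is not contained in any $\PP^5$ (it is a maximal linear space, so Corollary \ref{cor-linear} does not apply), and the unique $5$-dimensional Schubert variety containing it as a divisor, the one of degree $2$, has \emph{two} Schubert divisor classes (both $\PP^4$'s), so a hyperplane section has class $[\PP^4_{\max}]+[\PP^4]$ rather than a multiple of $[\PP^4_{\max}]$. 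The paper escapes this by switching correspondences for $(12:45)$: it uses the uniruling of $\OOO\PP^2$ by \emph{lines} parameterized by $E_6/P_5$, applies $\cT\circ\cT'$ to $\PP^0\subset\PP^1\subset\PP^2$ in $\OOO\PP^2$ to realize $(11:12)\subset(12:45)\subset(13:78)$ with $d_\tau=1$, $d_{\tau'}=10$, and then represents $m(12:45)$ by the locus of lines in $\OOO\PP^2$ meeting a plane curve of degree $m$. You need some such second correspondence (or another construction entirely) to handle $(12:45)$; the quadric correspondence alone cannot reach it.
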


\begin{proof}
The Cayley plane  is uniruled by smooth quadrics $\mathcal{Q}^8$ of dimension 8 and the ruling is parameterized by the dual Cayley plane $E_6/P_1$, see Example \ref{eg:E6a}.b. The dual Cayley plane contains a family of maximal $\PP^4$'s (corresponding to the $\PP^4$ denoted by the rigid node in Figure \ref{f:E6}).   Applying Lemma \ref{L:tits}, we see the Tits transforms of the linear spaces $\PP^0 \subset \PP^1 \subset \PP^2 \subset \PP^3 \subset \PP^4 \subset E_6/P_1$ are the Schubert varieties $(8:2) \subset (9:9) \subset (10:21) \subset (11:33) \subset (12:45)$, respectively. Here $d_\tau=8$, so by Corollary \ref{cor-linear} and Corollary \ref{cor-prop25}, we conclude that every positive multiple of the Schubert classes $(9:9), (10:21)$ and $(11:33)$ can be represented by an irreducible subvariety. For example, the Schubert varieties $(9:9)$ and $(10:21)$ are swept out by a pencil and a net of smooth 8-dimensional quadrics, respectively. The families of quadrics parameterized by a plane curve of degree $m$ or a surface in $\PP^3$ of degree $m$  define irreducible subvarieties of $\OOO \PP^2$ that represent $m (9:9)$ and $m (10:21)$, respectively. 

The Cayley plane is also uniruled by lines parameterized by $E_6/P_5$, see Example \ref{eg:E6a}.a. 
Given a Schubert variety $\Sigma \subset E_6/P_5$, let $\mathcal{T}(\Sigma)$ denote the Tits transform to $E_6/P_6 = \mathbb{O}\PP^2$; given $\Sigma' \subset \mathbb{O}\PP^2$, let $\mathcal{T}'(\Sigma')$ denote the Tits transform to $E_6/P_5$.  Then, by Lemma \ref{L:tits}, applying $\mathcal{T}\circ\mathcal{T}'$ to the linear spaces $\PP^0 \subset \PP^1 \subset \PP^2$ in $\OOO \PP^2$, we obtain the Schubert varieties with classes $(11:12) \subset (12:45) \subset (13:78)$ in $\OOO \PP^2$. Since $\mathbb{O}\PP^2 = E_6/P_6$, $E_6/P_5$ and $E_6/P_{5,6}$ are of dimensions $16$, $25$ and $26$, respectively, we see that $d_\tau = 1$ and $d_{\tau'} = 10$.  
It follows from Corollary \ref{cor-linear} and Corollary \ref{cor-prop25} that every positive multiple of the class $(12:45)$ can be represented by an irreducible subvariety. More concretely, the Schubert variety with class $(12:45)$ parameterizes lines in $\OOO \PP^2$ that intersect a fixed line. By considering lines that intersect a plane curve of degree $m$, we obtain irreducible representatives of the class $m (12:45)$. This concludes the proof of the corollary.
\end{proof}

\begin{corollary}\label{E7}
If $\sigma$ is a Schubert class in the Hasse diagram of the Freudenthal variety marked by a T, then every positive multiple of $\sigma$ can be represented by an irreducible subvariety.
\end{corollary}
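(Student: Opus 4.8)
The plan is to mimic the proof of Corollary \ref{E6}, systematically working through the Schubert classes in $G(\OOO^3,\OOO^6)$ marked by a $T$ in Figure \ref{f:E7}, using the two Tits correspondences associated to $E_7/P_1$ and $E_7/P_6$. First I would recall from Example \ref{eg:E7a} that $\freud$ is uniruled by lines (parameterized by $E_7/P_6$, with $d_\tau = 1$) and by smooth quadrics $\cQ^{10}$ (parameterized by $E_7/P_1$, with $d_\tau = 10$). By Lemma \ref{L:tits}, Tits transforms of Schubert varieties in $E_7/P_1$ and $E_7/P_6$ are Schubert varieties in $\freud$, and Table \ref{t:tits} already records the transforms of the Schubert subvarieties of $\OOO\PP^2 \subset E_7/P_1$; I would produce the analogous list for the linear spaces $\PP^0 \subset \PP^1 \subset \cdots \subset \PP^5 \subset E_7/P_6$ (whose images form a chain of Schubert varieties in $\freud$) and for chains of Schubert varieties in $\OOO\PP^2 \subset E_7/P_1$ containing the relevant $Y$ as a divisor.

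Next, for each $T$-marked class $\sigma = [\Sigma]$ I would exhibit a Schubert variety $\Sigma'$ with $\Sigma \subset \Sigma'$ a Schubert divisor, and a Schubert variety $\Sigma''$ in the appropriate $G/Q$ (either $E_7/P_1$ or $E_7/P_6$) with $\mathcal{T}(\Sigma'') = \Sigma'$, chosen so that $\dim(\mathcal{T}(\Sigma'')) = \dim(\Sigma'') + d_\tau$ — i.e. so that Lemma \ref{lemma-injectivity} applies. The point is that many $T$-classes arise as Tits transforms of non-maximal linear spaces, for which Corollary \ref{cor-linear} already gives irreducible representatives of every multiple passing through a general point; then Corollary \ref{cor-prop25} immediately promotes $m[\Sigma'' \text{-divisor}]$ to $m\sigma$. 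For the $T$-classes in the range handled by Table \ref{t:tits} (those with $\mathcal{T}(Y) = (11{:}11), (12{:}32), (13{:}65), (14{:}110), (14{:}98), (15{:}286), (16{:}364), (16{:}384), \dots$), the divisor $\Sigma''$ in $\OOO\PP^2$ contains a subvariety representing $m$ times its own Schubert divisor class by Corollary \ref{E6}, Corollary \ref{cor-Bertini-exceptional}, or Corollary \ref{stars}; feeding that into Corollary \ref{cor-prop25} with $d_\tau = 10$ yields the result. The remaining $T$-classes near the top of the diagram — $(21{:}5586)$, $(22{:}7524)$, $(23{:}13110)$, etc. — should be reachable either as Tits transforms of linear spaces $\PP^j \subset E_7/P_6$ under the line-uniruling ($d_\tau = 1$), analogous to how $(12{:}45)$ was handled in $\OOO\PP^2$, or by composing the two correspondences $\mathcal{T} \circ \mathcal{T}'$.

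The main obstacle is \emph{bookkeeping}: one must verify, class by class, that the candidate $\Sigma''$ really is a Schubert variety whose Tits transform is the desired $\Sigma'$ (this requires the Weyl-group coset computation behind Lemma \ref{L:tits}, or the tables of Landsberg–Manivel), and — more delicate — that the dimension equality $\dim(\mathcal{T}(\Sigma')) = \dim(\Sigma') + d_\tau$ holds so that $\eta$ is generically injective. This can fail: when the general fiber of $\eta$ over $\mathcal{T}(\Sigma')$ is positive-dimensional, the transform drops dimension and the multiplicity is not preserved. I would therefore need to check each relevant $\Sigma'$ against both unirulings and, if neither correspondence gives generic injectivity directly, fall back on the composition $\mathcal{T}\circ\mathcal{T}'$ or on a Bertini/cone construction from the earlier sections. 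I expect that, as in Figure \ref{f:E6}, every $T$-node in Figure \ref{f:E7} has been placed precisely because at least one such construction works, so the proof reduces to tabulating the correct correspondence for each node; the write-up would present this as a short case analysis organized by which uniruling is used, citing Corollaries \ref{cor-linear}, \ref{cor-Bertini-exceptional}, \ref{stars}, \ref{cor-prop25} and \ref{E6} at each step. Combined with Corollaries \ref{cor-Bertini-exceptional}, \ref{stars}, and \ref{E6}, this covers all flexible (equivalently, obstructed) Schubert classes in $\freud$, completing the proof of Theorem \ref{main-theorem} for the Freudenthal variety.
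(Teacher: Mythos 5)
Your proposal is correct and follows essentially the same route as the paper: apply Corollary \ref{cor-prop25} to the quadric uniruling parameterized by $E_7/P_1$ (with $d_\tau=10$), feeding in the irreducible representatives in $\OOO\PP^2\subset E_7/P_1$ supplied by Corollaries \ref{cor-linear}, \ref{cor-Bertini-exceptional}, \ref{stars} and \ref{E6} via Table \ref{t:tits}, and fall back on the line uniruling ($E_7/P_6$, via $\cT\circ\cT'$ applied to $\PP^0\subset\PP^1\subset\PP^2$) where the first correspondence fails. The one bookkeeping point you leave open resolves differently than you guess: the unique class needing the fallback is $(18{:}520)$, whose source $(8{:}2)=[\cQ^8]$ in $\OOO\PP^2$ is rigid, whereas the top-dimensional $T$-class $(22{:}7524)$ comes from the flexible class $(12{:}45)$ and is already covered by the quadric correspondence.
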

\begin{proof}
The Freudenthal variety is ruled by quadrics $\mathcal{Q}^{10}$ of dimension 10 parameterized by $E_7/P_1$, see Example \ref{eg:E7a}.b.  By the same token, the variety $E_7/P_1$ is uniruled by Cayley planes parameterized by $E_7/P_7$. In particular, the Cayley plane occurs as a Schubert variety in $E_7/P_1$ (as the Tits transform of a point in $G(\OOO^3, \OOO^6)$). Table \ref{t:tits} summarizes the Tits transforms of Schubert cycles contained in $\OOO \PP^2 \subset E_7/P_1$. The rigid Schubert classes have been indicated by a $*$.  Since $E_7/P_1$ and $E_7/P_{1,7}$ have dimensions $33$ and $43$, respectively, we see that $d_\tau = 10$.   By Corollary \ref{cor-prop25}, it follows that all the positive multiples of the classes on the Hasse diagram marked by a T, except for $(18:520)$, can be represented by an irreducible subvariety since the corresponding classes in $\OOO \PP^2$ can be represented by irreducible subvarieties.

To see that the multiples of the  class $(18:520)$ can be represented by irreducible subvarieties, we consider the ruling of $G(\OOO^3, \OOO^6)$  by lines parameterized by $E_7/P_6$ (Example \ref{eg:E7a}.a). Given a Schubert variety $\Sigma \subset E_7/P_6$, let $\mathcal{T}(\Sigma)$ denote the Tits transform to $E_7/P_7 = \freud$; given $\Sigma' \subset \freud$, let $\mathcal{T}'(\Sigma')$ denote the Tits transform to $E_7/P_6$.  Then, by Lemma \ref{L:tits}, applying $\mathcal{T}\circ\mathcal{T}'$ to the linear spaces $\PP^0 \subset \PP^1 \subset \PP^2$ in $G(\OOO^3, \OOO^6)$, we obtain the Schubert varieties $(17:78) \subset (18:520) \subset (19:1710)$ in $G(\OOO^3, \OOO^6)$.  Since $\freud = E_7/P_7$, $E_7/P_6$ and $E_7/P_{6,7}$ have dimensions $27$, $42$ and $43$, respectively, we see that $d_\tau = 1$ and $d_{\tau'} = 16$. By Corollary \ref{cor-prop25}, it follows that every positive multiple of $(18:520)$ can be represented by an irreducible subvariety. More concretely, the Schubert variety with class $(18:520)$ parameterizes lines in $G(\OOO^3, \OOO^6)$ that intersect a fixed line. By considering the variety of lines that intersect a plane curve of degree $m$, we obtain irreducible representatives of $m (18:520)$.  This concludes the proof of the corollary.
\end{proof}

We are now ready to complete the proof of Theorem \ref{main-theorem}.
\begin{proof}[Proof of Theorem \ref{main-theorem}]
Let $X$ be a cominuscule homogeneous variety and let $\sigma$ be an obstructed Schubert class in the cohomology of $X$. If $X$ is a Grassmannian $G(k,n)$, then by Proposition \ref{prop-Grassmannian}, every positive multiple of $\sigma$ can be represented by an irreducible subvariety. If $X$ is a quadric hypersurface, then by Corollary \ref{cor-quadric}, every positive multiple of $\sigma$ can be represented by an irreducible subvariety. If $X$ is the Lagrangian Grassmannian $LG(n,2n)$ or the orthogonal Grassmannian $OG(n,2n)$, then by Corollary \ref{cor-isotropic}, every positive multiple of $\sigma$ can be represented by an irreducible subvariety. Finally, if $X$ is the Cayley plane or the Freudenthal variety, observe that every obstructed class (those nodes in Figures \ref{f:E6} and \ref{f:E7} that are not encircled) is decorated by one of $+, *$ or T.  By Corollary \ref{cor-Bertini-exceptional}, Corollary \ref{stars} and Corollaries \ref{E6} and \ref{E7}, respectively, multiples of $\sigma$ can be represented by irreducible subvarieties. This concludes the proof of Theorem \ref{main-theorem}.
\end{proof}





\end{document}